\newcommand{\tmat}[1]{\begin{bsmallmatrix} #1 \end{bsmallmatrix}} 
\newcommand{\PWF}[4]{
  \left\{ \begin{array}{ll}
    #1 & \mbox{ if } #2 \\
    #3 & \mbox{ if } #4 \\
  \end{array}\right.}
\newcommand{\PWFO}[4]{
  \left\{ \begin{array}{ll}
    #1 & \mbox{ if } #2 \\
    #3 & \mbox{ otherwise #4}  \\
  \end{array}\right.}
\newcommand{\Z}{\mathbb{Z}}
\theoremstyle{plain}
\newtheorem{theorem}  {Theorem}  [section]
\newtheorem{lemma}  [theorem]   {Lemma}
\newtheorem{corollary}[theorem] {Corollary}
\newtheorem{construction} [theorem] {Construction}
\newtheorem{fact} [theorem] {Fact}
\newtheorem{proposition} [theorem] {Proposition}
\newtheorem{claim} [theorem] {Claim}
\newtheorem{claim*} {Claim}
\newtheorem{conjecture}[theorem] {Conjecture}
\newtheorem{quest}[theorem] {Question}
\newtheorem*{compprob}{Problem}
\theoremstyle{definition}
\newtheorem{definition}[theorem] {Definition}
\newtheorem{example}[theorem] {Example}
 \newcommand{\prob}[3]{\begin{compprob} #1 \\  
  {\bf Instance:} #2   \\
  {\bf Decision:} #3  
\end{compprob}}
\newcommand{\claimproof}{\renewcommand{\qedsymbol}{$\diamond$}}
\DeclareMathOperator{\Col}{Col}
\DeclareMathOperator{\Hom}{Hom}
\DeclareMathOperator{\NP}{NP}
\DeclareMathOperator{\Poly}{P}
\DeclareMathOperator{\PSPACE}{PSPACE}
\DeclareMathOperator{\Recol}{Recol}
 \newcommand{\M}{M}
 \newcommand{\N}{N} 
 \DeclareMathOperator{\Mat}{\M}
 \newcommand{\g}{\mathbb{G}}
 \newcommand{\m}{\mathbb{M}}
 \renewcommand{\k}{\mathbb{K}}
 \DeclareMathOperator{\mHom}{\Hom_\m} 
 \DeclareMathOperator{\mCol}{\Col_\m} 
 \DeclareMathOperator{\mRecol}{\Recol_\m} 
 \DeclareMathOperator{\gHom}{\Hom_\g} 
 \DeclareMathOperator{\gCol}{\Col_\g} 
 \DeclareMathOperator{\gRecol}{\Recol_\g}
 \DeclareMathOperator{\kCol}{\Col_\k}
 \DeclareMathOperator{\kRecol}{\Recol_\k}
 \DeclareMathOperator{\PG}{PG} 
\tikzset{
  vert/.style={circle, draw=black!100,fill=black!100,thick, inner sep=0pt, minimum size=2mm},
  empty/.style={draw=none, fill=none, minimum size=0mm, inner sep=0pt}}
\newcommand{\Verts}[2][,]{\foreach \i/\x/\y in {#2}{\draw (\x#1\y) node (\i){};}}
\newcommand{\Edges}[1]{\foreach \i/\j in {#1}{\draw (\i) edge (\j);}}
\newcommand{\labEdges}[1]{\foreach \i/\j/\l/\p in {#1}{\draw (\i) edge node[empty, fill=white,  \p=.1pt]  {\l}   (\j);}}
 \newcommand{\Vlabel}[4][.3cm]{\draw node[empty, #3 of = #2, node distance = #1] () {$#4$};}
 \newcommand{\Vlabels}[2][.3cm]{\foreach \ver/\pos/\lab in {#2}{\Vlabel[#1]{\ver}{\pos}{\lab};}}
 \newcommand{\Alabel}[3]{\draw  (#1,#2) node[empty] () {$#3$};} 
 \newcommand{\mphi}{\phi_\m}
 \newcommand{\bone}[1]{\chi_{#1}} 
 \newcommand{\hcube}{\frac{1}{2}Q}
 \DeclareMathOperator{\id}{id}
\begin{document}
\thispagestyle{empty} 
\title{The complexity of matroid homomorphism reconfiguration}

\author{Cheolwon Heo}
\address{Applied Algebra and Optimization Research Center, Sungkyunkwan University, 2066 Seobu-ro, Suwon-si, Gyeonggi-do, South Korea, 16419}
\email{cwheo@skku.edu}

\author{Mark Siggers}
\address{Kyungpook National University Mathematics Department,
80 Dae-hak-ro, Daegu Buk-gu, South Korea, 41566}
\email{mhsiggers@knu.ac.kr}

\begin{abstract}
 We consider a reconfiguration version of the homomorphism problem ${\rm Hom}_\mathbb{M}(N)$ for binary matroids $N$. This reconfiguration problem,  ${\rm Recol}_\mathbb{M}(N)$,  asks, for two homomorphisms $\phi$ and $\psi$ of a matroid $M$ to $N$, if there is a path of homomorphism from $\phi$ to $\psi$ such that consecutive homomorphism in the path differ on a single cocircuit of $N$.  We show that this problem is trivial in the case that $N$ is, or dismantles to,
 the graphic matroid $M(K_2)$, and that the problem is $\PSPACE$-complete when $N$ is the graphic matroid $M(K_3)$, $M(K_4)$, or any graphic matroid containing $M(K_5)$. 
\end{abstract}

\maketitle

 \section{Introduction}

 All matroids in this paper are binary matroids, meaning that they have a representation by a matrix over $\Z_2$; and are simple, meaning the columns of the representing matrix are distinct.   
 A homomorphism from a matroid $\M$ to a matroid $\N$ is a point map that takes cycles to cycles.  
 More precisely, where the point space $P(\N)$ of a binary matroid is the vector space over $\Z_2$ with basis $E(\N)$, a point map $\tau: E(\M) \to E(\N)$ extends linearly to map $\tau$ of point spaces.  This means that for sets $X, Y \subset E(\M)$, $\tau(X \cup Y)$ is the symmetric difference of $\tau(X)$ and $\tau(Y)$. 

 \begin{definition}\label{def:hom}
 For binary matroids $\M$ and $\N$ a {\em matroid homomorphism} $\tau: \M \to \N$ is a point map $\tau:E(\M) \to E(\N)$ such that for every circuit $Z$ of $M$, $\tau(Z)$ is a disjoint union of circuits of $N$. We consider the empty set to be the disjoint union of an empty set of circuits.
 We write $\M \to \N$ to mean that there exists a matroid homomorphism $\tau: \M \to \N$. 
 \end{definition}

 Though there are several notions of matroid colourings, we will, in analogue to the graph case, refer to matroid homomorphisms of $\M$ to $\N$ as {\em $\N$-colourings of $\M$}. We refer to the points of $\N$ as {\em colours}.

 \begin{figure}[h!]
 \centering
 \begin{tikzpicture}[every node/.style={vert}]
 \begin{scope}[xshift = 0cm] 
 \Verts[:]{a/90/1.5, b/210/1.5, c/330/1.5, m/0/0}
 \Edges{a/b, c/m}
 \Edges{a/c, b/m}
 \Edges{b/c, a/m}
 \draw (30:.75) node[empty, fill=white] () {$11$};
 \draw (150:.75) node[empty, fill=white] () {$01$};
 \draw (270:.75) node[empty, fill=white] () {$10$};
 \draw (90:.6) node[empty, fill=white] () {$10$};
 \draw (210:.6) node[empty, fill=white] () {$11$};
 \draw (330:.6) node[empty, fill=white] () {$01$};
 \end{scope}
 \begin{scope}[xshift = 5cm] 
 \Verts[:]{a/90/1.5, b/210/1.5, c/330/1.5}
 \Edges{a/b}
 \Edges{a/c}
 \Edges{b/c}
 \draw (30:.75) node[empty, fill=white] () {$11$};
 \draw (150:.75) node[empty, fill=white] () {$01$};
 \draw (270:.75) node[empty, fill=white] () {$10$};
 \end{scope} 
 \end{tikzpicture}
 \end{figure}

 \begin{example}\label{K4K3}
  Never too early for a simple example, the figure above represents a matroid homomorphism of the graphic matroid $M(K_4)$ to the graphic matroid $M(K_3)$. The points of $M(K_3)$ are the non-zero points  $\{\tmat{0\\1},\tmat{1\\0},\tmat{1\\1} \}$ of $\Z_2^2$ (which we write compactly as $\{ 01, 10, 11 \}$). 

  These are the colours that we map the points of $M(K_4)$ to. The colours sum over any circuit, as vectors over $\Z_2$, to the zero vector $00$. So this represents a matroid homomorphism.  
 \end{example} 


 It is not difficult to see that a graph homomorphism $\phi: G \to H$ induces a matroid homomorphism $\mphi: \M(G) \to \M(H)$ of the corresponding graphic matroids by mapping a point (edge) $uv$ of $\M(G)$ to the point $\mphi(uv):= \phi(u)\phi(v)$ of $\M(H)$.  
 On the other hand, a matroid homomorphism $\M(G) \to \M(H)$ need not imply a graph homomorphism $G \to H$. 
 Indeed \Cref{K4K3} is a prime example of this, it gives a matroid homomorphism of $\M(K_4)$ to $\M(K_3)$, while clearly there is no graph homomorphism of $K_4$ to $K_3$. 

 There is, however, a familiar duality relating $\N$-colourings of a graphic matroid $\M(G)$ to $H$-colourings of the graph $G$.  One will notice that the condition that the colours on points of $\M(K_4)$ sum to $0$ on circuits is exactly the condition that makes this nowhere-$0$ colouring of the edges of $K_4$ a $\Z_2^2$-tension from Tutte's classic work in \cite{Tutte54}.   Indeed, for graphic matroids, our definition of a matroid homomorphism correspond with the notion of a {\em cyclic map} from \cite{LMT88}, a {\em $\Z_2$-tension continuous map} from \cite{DNR06} (this follows from Theorem 3.1 of their paper, and \Cref{compose} of ours), and a {\em cut continuous map} from \cite{NS08}. All these works build on \cite{Tutte54}.
 
 This basic setup of Tutte's duality is generalised by the notion of the decision graph, which we introduced in \cite{HKS}, only to notice later that it generalises similar constructions for graphic matroids from \cite{DNR06}.

 Defined in \Cref{sect:decisiongraphs}, the decision graph $D(\N)$, for a matroid $\N$, has the following property for any graph $G$,
 \[ G \to D(\N) \iff M(G) \to \N. \]
 
 In \cite{HKS} we used the decision graph to address the 
 complexity of the problem $\mHom(\N)$, for fixed $\N$, of deciding if a given matroid  $\M$ admits an $\N$-colouring. The complexity of the problem $\mHom(\N)$ for a matrix $\N$ was shown to be polynomial time solvable if
 $\N$ has a loop, or is bipartite, and was shown to be $\NP$-complete otherwise. This follows the dichotomy of the analogous result for graph homomorphisms from \cite{HN90}, and follows quickly from this result using the decision graph $D(\N)$ of a matroid $\N$. 

 For those unfamiliar with Tutte's duality, the following is an elementary example described using the decision graph. 

 \begin{example}\label{ex:Jaeger}
   The clique $K_4$ is the decision graph $D(\M(K_3))$ for the graphic matroid $\M(K_3)$ of $K_3$.  
   Indeed, using the representation of $\M(K_3)$ over $\Z_2$ by the matrix $A = \tmat{0 & 1 & 1\\  1 & 0 & 1}$ as in \Cref{K4K3}, label the vertices of $K_4$ with the columns, or points, $\{ 00, 01, 10, 11 \}$ of $\Z_2^2$.
   For a graph homomorphism $\phi: G \to K_4$ we get a 
   matroid homomorphism $\tau_\phi: \M(G) \to \M(K_3)$ by $\tau_\phi(uv) = \phi(u) + \phi(v)$.  On the other hand, for $\tau: \M(G) \to K_3$ 
   we get a graph homomorphism $\phi_\tau: G \to K_4$ by setting $\phi(v_0) = 00$ and then recursively defining $\phi_\tau$ on a vertex 
   $v$ of $G$ by setting $\phi_\tau(v) = \phi_\tau(v') + \tau(v'v)$ where $v'$ is the precursor of $v$ in a fixed spanning tree of $G$ rooted at $v_0$.   See \Cref{fig:ex1} for an example of a $K_3$-colouring of a graph $G$, the $\M(K_4)$-colouring $\tau_\phi$ it defines on $\M(G)$, and the $K_3$-colouring $\phi_{\tau_\phi}$ of $G$ that $\tau_\phi$ defines.  
 Notice that $\phi$ and $\phi_{\tau_\phi}$ differ by the constant $11$.  
 \end{example}
 
  \begin{figure}
    \renewcommand{\tmat}[1]{#1}
    \begin{tikzpicture}[x=1.5cm, y=1.5cm, every node/.style = {vert}]
  
       \Verts{1/0/0, 2/0/1, 3/1/1, 4/1.7/.5, 5/1/0}
       \Edges{1/2, 2/3, 3/4, 4/5, 5/1, 3/5}
       \Vlabels[.4cm]{1/below/\tmat{10},2/above/\tmat{11},3/above/\tmat{10}, 4/right/\tmat{11},5/below/\tmat{01}}
       \Alabel{.5}{.5}{\phi}

     \begin{scope}[xshift=4cm]
       \Verts{1/0/0, 2/0/1, 3/1/1, 4/1.7/.5, 5/1/0}
       \labEdges{1/2/$\tmat{01}$/, 2/3/$\tmat{01}$/, 3/4/$\tmat{01}$/, 4/5/$\tmat{10}$/, 5/1/$\tmat{11}$/, 3/5/$\tmat{11}$/}
       \Alabel{.5}{.5}{\tau_\phi}
     \end{scope}

     \begin{scope}[xshift=8cm]
       \Verts{1/0/0, 2/0/1, 3/1/1, 4/1.7/.5, 5/1/0}
       \Edges{1/2, 2/3, 3/4, 4/5, 5/1, 3/5}
       \Vlabels[.4cm]{1/below/\tmat{01},2/above/\tmat{00},3/above/\tmat{01}, 4/right/\tmat{00},5/below/\tmat{10}}
       \Alabel{.5}{.5}{\phi_{\tau_\phi}}
     \end{scope}

\end{tikzpicture}
  \caption{A $K_4$-colouring $\phi$ of a graph $G$, the $\M(K_3)$-colouring $\tau_\phi$ of $\M(G)$ that it defines, and the $K_4$-colouring $\phi_{\tau_\phi}$ of $G$ that $\tau_\phi$ defines.}\label{fig:ex1}
  \end{figure}


 Retraction and list variations of the problem $\Hom_\M(N)$ were also considered in \cite{HKS}; for these variations, while the decision graph was still the main tool, there were several differences between the graph and matroid versions of the problem.  In this paper we look a reconfiguration version of $\Hom_\M(H)$

 In a reconfiguration version of a decision problem, one does not look to decide if there is a solution to an instance, but looks to decide if there is series of small changes one can make to get between a given pair of solutions. One well known reconfiguration version of the graph colouring problem $\gHom(H)$ is the graph recolouring problem $\gRecol(H)$. Where the colouring graph $\gCol(G,H)$ is the graph on he set of $H$-colourings of $G$ in which two are adjacent if they differ on a single vertex, one must decide, for given maps $\phi$ and $\psi$, if there is a path between $\phi$ and $\psi$ in $\gCol(G,H)$. 
 
 The colouring graph generally has order exponential in the order of the instance $G$, so deciding if such a path exists can take exponential time. It is not hard to show that the problem $\gRecol(H)$ is in the complexity class $\PSPACE$. 
 It was shown in \cite{CvdHJ08} that the problem $\gRecol(K_n)$
 is in $\Poly$ if $n \leq 3$ and in \cite{BonCer09} that it is $\PSPACE$-complete if $n \geq 4$.
 Here is our reconfiguration graph.

 \begin{definition}\label{def:matrecgraph}
 The {\em matroid $\N$-colouring graph $\mCol(\M,\N)$ of $\M$} for binary matroids $\M$ and $\N$, is the graph whose vertex set is the set of matroid homomorphisms from $\M$ to $\N$, in which 
 two vertices $\tau$ and $\tau'$ are adjacent if for some cocircuit $C$ of $\M$, $\tau(e) \neq \tau'(e)$ if and only if $e \in C$. 
 \end{definition}

 \begin{example}
  A homomorphism $\tau : M(H) \to \M(K_3)$ of the graphic matroid of $H$ assigns the colours $01,10$ and $11$ to the edges of $H$, and as these must sum to $0$ over $\Z_2$ on any cycle, the parity of the number of occurrences of each colour, on a cycle, must be the same. For the graph $H = C_5$, for example, a homomorphism $\tau: M(C_5) \to M(K_3)$ must have three edges of one colour, and one edge of each of the other colours. So there are $60$ vertices in 
  $\mCol(M(C_5),M(K_3))$. Any two edges of $C_5$ are a cut-set, so we can switch the colours on any two edges, and get an adjacent colouring. This allows us to switch which colour appears three times, and to reorder the colours. So $\mCol(\M(C_5),\M(K_3))$ is connected. 
 \end{example}

 This seems to be a natural definition of recolouring as if the image of a single point $e$ under a homomorphism is changed, 
 at least one other point in every cycle containing $e$ must also be changed. A minimal set of points such that the intersection with every circuit has even cardinality is cocircuit. In Section \ref{sect:tech} we give an equivalent, and often more convenient, definition using a matrix representation of $\N$ to define addition: we show that $\tau \sim \tau'$ in $\mCol(\M,\N)$ if and only if there is a constant $c$ and a cocircuit $C$ of $\M$ such that \[ \tau' = \PWFO{\tau(e) + c}{e \in C}{\tau(e)}{.} \]

 The {\em matroid $\N$-recolouring problem} is defined as follows. 

\prob{$\mRecol(\N)$}{A binary matroid $\M$ and two maps $\tau,\sigma \in \mCol(G,H)$.}{Is there a path in $\mCol(G,H)$ between $\tau$ and $\sigma$?}

 This problem is clearly in the complexity class $\PSPACE$. We expect that it will be polynomial time solvable, or $\PSPACE$-complete, depending on $\N$. 
 Our main theorem is the following, which gives the complexity of $\mRecol(\N)$ in the case that $\N$ is the graphic matroid of a clique $K_n$. We say that $\mRecol(\N)$ is {\em trivial} if all instances are  
 YES instances.

 \begin{theorem}\label{FullMain}
  The problem $\mRecol(\M(K_n))$ is trivial if $n \leq 2$, and is otherwise $\PSPACE$-complete. 
 \end{theorem}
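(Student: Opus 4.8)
The plan separates the trivial range $n\le 2$ from the $\PSPACE$-complete range $n\ge 3$, routing the latter through the decision graph of \Cref{sect:decisiongraphs}. For $n\le 2$ the target carries almost no information: $\M(K_1)$ has no points, while $\M(K_2)$ has a single non-loop point to which every point of an input matroid $\M$ must be sent, the resulting constant map being a homomorphism exactly when every circuit of $\M$ is even. In either case there is at most one homomorphism $\M\to\M(K_n)$, so $\mCol(\M,\M(K_n))$ has at most one vertex; the two maps of any instance of $\mRecol(\M(K_n))$ therefore coincide and are joined by the empty walk, so $\mRecol(\M(K_n))$ is trivial (this is the ``dismantles to $\M(K_2)$'' case). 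Membership in $\PSPACE$ for all $n$ is already noted after \Cref{def:matrecgraph}, so for $n\ge 3$ it remains to establish $\PSPACE$-hardness.

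For the hardness I would restrict to graphic inputs $\M=\M(G)$ and use the decision-graph dictionary of \cite{HKS,DNR06}: for connected $G$, the $\M(K_n)$-colourings of $\M(G)$ are precisely the maps $uv\mapsto\phi(u)+\phi(v)$ arising from $D(\M(K_n))$-colourings $\phi$ of $G$, with $\phi$ and $\phi'$ inducing the same colouring of $\M(G)$ iff they differ by a global translation of $\Z_2^{n-1}$; and the recolouring move $\tau\mapsto\tau+c\cdot\bone{C}$ of \Cref{def:matrecgraph}, with $C$ a cocircuit of $\M(G)$, that is, an edge cut $\delta(S)$ of $G$, is exactly the move that translates $\phi(v)$ by $c$ for every $v$ on one side $S$ of the cut and fixes the other side. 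Hence, restricted to graphic matroids, $\mRecol(\M(K_n))$ is the reconfiguration problem for $D(\M(K_n))$-colourings of graphs under these ``cut-translation'' moves, and it is enough to prove this restricted problem $\PSPACE$-hard.

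For $n=3$ one has $D(\M(K_3))=K_4$ (\Cref{ex:Jaeger}) and $\gRecol(K_4)$ is $\PSPACE$-complete \cite{BonCer09}; note that a cut-translation at a singleton bond $\{v\}$ recolours $v$ alone, and since the four $K_4$-colour classes fill $\Z_2^{2}$ and any nonzero constant is admissible this is an ordinary recolouring of $v$. I would reduce $\gRecol(K_4)$ to the restricted problem by hanging a small fixed ($K_4$-colourable) gadget at each vertex $v$ of the source instance, engineered so that on every colouring reachable from the two input colourings the only non-trivial cut-translations available are the singleton-bond ones at source vertices; reconfigurations of the enlarged instance then project bijectively onto those of the source. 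For $n=4$, one computes $D(\M(K_4))=K_{4\times 2}$, the cocktail-party graph on $\Z_2^{3}$ ($a\not\sim b$ iff $a+b\in\{000,111\}$), and I would run the same style of reduction from colouring reconfiguration with this target, using the antipodal frozen pairs in the pinning gadgets, or else reduce from the $n=3$ case. For $n\ge 5$, since $K_5\subseteq K_n$ the restriction $\M(K_n)|_{E(K_5)}$ equals $\M(K_5)$, so this case follows from the general statement that $\mRecol(\N)$ is $\PSPACE$-complete whenever $\N$ contains $\M(K_5)$: there one simulates an instance of the (already hard) $n=3$ or $n=4$ problem inside $\N$-colourings via a gadget built from the embedded $\M(K_5)$ that forces the relevant points to take values among the points of that copy, so that the extra points of $\N$ create no shortcuts.

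The main obstacle — and the reason the three cases $\M(K_3)$, $\M(K_4)$, $\M(K_n)\supseteq\M(K_5)$ must be treated separately — is the gap between matroid recolouring, which flips an entire cocircuit, and ordinary vertex recolouring: a cut-translation on a large vertex set is a priori much more powerful than any sequence of single-vertex moves, so each reduction must produce gadgets that simultaneously keep the instance colourable, pin enough vertices that the only ``live'' moves mirror ordinary recolouring, and keep the two designated colourings mutually frozen exactly as the source problem requires. Designing gadgets that do all of this for the complete target $K_4$ and for the non-complete targets $D(\M(K_4))$ and embedded copies of $\M(K_5)$ is the technical heart of the argument; the $n\le 2$ case and the $\PSPACE$ upper bound are routine.
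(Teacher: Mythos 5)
Your treatment of $n\le 2$ and the $\PSPACE$ upper bound is fine, and your $n\ge 5$ step (a gadget built around an embedded $\M(K_5)$ that forces the image into a fixed $\M(K_4)$) is in the same spirit as the paper's \Cref{thm:otherKn}. The genuine gap is in your base cases $n=3,4$: you propose to reduce from ordinary single-vertex recolouring $\gRecol(K_4)$ (resp.\ $\gRecol$ of the cocktail-party graph) by adding pinning gadgets so that ``the only non-trivial cut-translations available are the singleton-bond ones.'' This cannot work. By \Cref{easyMK}, \emph{every} Kempe move on a $D_u(\N)$-colouring of the (gadget-augmented) instance --- toggling the two colours on a bichromatic component $U$ --- is realised by a sequence of legal cut-translations on the bonds of $\delta(U)$, because no edge leaving $U$ carries the forbidden colour. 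No gadget can switch these moves off; they are available whenever a bichromatic component exists. Consequently the components of $\mCol(\M(G^*),\M(K_3))$ refine at best the \emph{Kempe} components of $\gCol(G,K_4)$, not the ordinary single-vertex-recolouring components, and these genuinely differ: $\gCol(K_4,K_4)$ is $24$ isolated vertices while $\mCol(\M(K_4),\M(K_3))$ is connected (the paper's $K_{3,3}$ example). Your reduction would therefore output YES on NO-instances of $\gRecol(K_4)$, so it is not answer-preserving. (For $n=4$ you additionally rely on $\PSPACE$-hardness of reconfiguration for the target $K_{4\times 2}$, which you neither cite nor prove.)

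The fix is to change the source problem: reduce from \emph{Kempe} recolouring $\kRecol(K_{2^t})$, which is $\PSPACE$-complete for $2^t\ge 4$ (\Cref{kemperecol}). This is exactly what the paper does: \Cref{thm:MK} shows that for $\N=PG(t-1,2)$ the Tutte correspondence is a bijection between components of $\mCol(\M(G),\N)$ and of $\kCol(G,K_{2^t})$ (the converse direction, decomposing a cut-translation into Kempe moves, is the nontrivial half and uses that the decision graph is a complete graph). This gives hardness of $\mRecol(PG(t-1,2))$, hence of $\mRecol(\M(K_{2^t}))$ via the retraction $\M(K_{2^t})\to PG(t-1,2)$ and \Cref{prop:MRetEquiv}; the case $n=3$ then follows because $\M(K_4)$ dismantles to $\M(K_3)$ (\Cref{Dismantk4k3}, \Cref{prop:dismant}); and $n\ge 5$ is handled by the $\M(K_5)$-gadget reduction from $\mRecol(\M(K_4))$. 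No pinning gadgets against cut-translations are needed anywhere, and indeed none could be made to work for the reason above.
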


  As is true in the graph case, proving the tractable half of this theorem in easy, though for graphs the tractable cases are not all trivial in the sense that we've defined it, as $\gCol(G,K_2)$ is either empty or has two components for any instance $G$. 

  \begin{fact}\label{FullMainEasy}
      The problems $\mRecol(\M(K_1))$ and $\mRecol(\M(K_2))$ are trivial. 
  \end{fact}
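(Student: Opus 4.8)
The plan is to observe that both target matroids are degenerate in size: the graph $K_1$ has no edges, so $\M(K_1)$ has empty ground set, and the graph $K_2$ has a single edge (a coloop), so $\M(K_2)$ has a one-element ground set. Consequently, for any binary matroid $\M$, the set of \emph{point maps} $E(\M)\to E(\N)$ has size at most one in each of these two cases. When $\N=\M(K_1)$ there is such a map exactly when $E(\M)=\emptyset$, and then it is unique; when $\N=\M(K_2)$ the only point map is the one sending every point of $\M$ to the single point of $\N$ (whether this map is actually a homomorphism depends on $\M$, but it is the only candidate in any event).

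First I would record this, concluding that $\mCol(\M,\N)$ has at most one vertex for $\N\in\{\M(K_1),\M(K_2)\}$, and is therefore (trivially) connected. Then, since an instance of $\mRecol(\M(K_1))$ or $\mRecol(\M(K_2))$ comes equipped with two vertices $\tau,\sigma$ of $\mCol(\M,\N)$, we must have $\tau=\sigma$, so the length-zero path already joins them. Hence every instance is a YES instance, which is exactly what it means for the problem to be trivial. (If one prefers to split cases: whenever $\mCol(\M,\N)$ is empty, for example $\N=\M(K_1)$ with $\M$ having a point, or $\N=\M(K_2)$ with $\M$ having an odd circuit, there are no instances at all and triviality holds vacuously; otherwise $\mCol(\M,\N)$ is a single vertex and triviality is immediate.)

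There is no real obstacle here; the only thing requiring care is the bookkeeping convention that makes $|E(\M(K_1))|=0$ and $|E(\M(K_2))|=1$, together with the fact that a matroid homomorphism is by definition a point map and hence determined by its values on $E(\M)$. This also explains why, unlike in the graph setting where $\gRecol(K_2)$ is not trivial (as noted after \Cref{FullMain}), the matroid version $\mRecol(\M(K_2))$ is: $\M(K_2)$ carries a single point, not two colour classes that could be permuted.
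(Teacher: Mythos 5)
Your argument is correct and matches the paper's own proof: both reduce to the observation that $\mCol(\M,\N)$ has at most one vertex when $\N$ is $\M(K_1)$ or $\M(K_2)$ (empty if $\M$ has a point, respectively an odd circuit, and a single vertex otherwise), so every instance is a YES instance. You simply spell out the counting of point maps more explicitly than the paper does.
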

  \begin{proof}
      That $\mRecol(\M(K_1))$ is trivial, is easy to see.   
      As an instance of $\mRecol(M(K_2))$ consists of vertices of $\mCol(\M,M(K_2))$ for some $\M$. But this is empty if $\M$ has an odd cycle, and contains a single vertex otherwise. Either way, 
      all instances are YES instances. 
  \end{proof}


 Our main goal is therefore proving that $\Recol(\M(K_n))$ is $\PSPACE$-complete for $n \geq 3$. The proof does not proceed as one might expect, being easier in the case that $n$ is a power of $2$. In proving out main result, we prove several other results.  We now give a brief outline of the paper and the proof, mentioning some of the other results that we get along the way.

 In Section \ref{sect:tech} we give alternate versions of Definitions \ref{def:hom} and \ref{def:matrecgraph} that depend on a particular 
 binary representation, and then provide some useful tools for working with these definitions. 

 In Section \ref{sect:Induced} we give matroid analogues of several useful `categorical-type' graph results, showing that homomorphisms 
 of matroids often induce homomorphisms of matroid recolouring graphs, or of their transitive closures.  In particular, we define a matroid dismantling, and prove, in \Cref{smallcliques}, that if $\N$ dismantles to $M(K_2)$ (or to a loop) then $\mRecol(\N)$ is trivial. 
 This is a partial analogue of a result of Brightwell and Winkler from \cite{BW00} which shows that the graph recolouring problem $\gRecol(H)$ is trivial if and only $H$ is dismantlable.  
 Conjecture \ref{conj:dismant} posits that $\mRecol(\N)$ is trivial if and only if $\N$ dismantles to $\M(K_2)$.   

In Section \ref{sect:decisiongraphs} we recall/generalise the definition of the decision graph, and use it to prove \Cref{FullMain}
in the case that $n$ is a power of $2$ by reducing the problem of Kempe $K_4$-recolouring to matroid recolouring. In fact \Cref{HardK2t} says that for $t \geq 2$,  $\mRecol(\M(K_{2^t}))$ is not only $\PSPACE$-complete, but is $\PSPACE$-complete for graphic instances. This gives the $n = 3$ case of \Cref{FullMain} for graphic instances too, using results from Section \ref{sect:Induced}. 

Finally, in \Cref{sect:alln} we finish off the proof of \Cref{FullMain} by showing, in \Cref{thm:otherKn}, that $\mRecol(\M(G))$ is $\PSPACE$-complete for any graph $G$ containing a $K_n$ for $n \geq 5$.  We expect that this holds for graphic instances too, but we were unable to show it.

 \section{Technical Defintions}\label{sect:tech}

 Recall that all matroids in this paper are simple binary matroids. 
 A {\em loop} in a matroid is a circuit consisting of a singular point.
 A matroid with a loop is called {\em looped}.  In Sections \ref{sect:tech} and \ref{sect:Induced}, results and definitions are for matroids which may have loops. From Section \ref{sect:decisiongraphs}
 we consider only loopless matroids.

 Any simple binary matroid $\N$ can be representated by an  $m \times |E(\N)|$ matrix $A$ over $\Z_2$ with distinct columns. The value $m$ can be taken to be the rank of $\N$, but sometimes it is useful to use a greater $m$.
 The columns of $A$ are the points of $\N$, and as they are vectors in space $\Z_2^m$ they can be added together.  Their span in $\Z_2^m$ is the {\em point space} $P(\N)$ of $\N$.  A subset of columns is a cycle if it sums, over $\Z_2$ to $0$.  

  The requirement, for a map $\tau: E(\N) \to E(\M)$, that $\tau(Z)$ is a disjoint union of circuits for any circuit $Z$ becomes the requirement that if $\sum_{x \in Z} x = 0$, then   $\sum_{x \in Z} \tau(x) = 0$; that is, it becomes the requirement that $\tau$ is a linear map of $P(\N)$ to $P(\M)$. Though the sum of two elements depends on the representation $A$, the fact that a set of points sums to zero does not depend on $A$, and so the following equivalent definitions of a matroid homomorphisms, which use binary representations of the matroids, are independent of the actual representations used.  
 
 \begin{fact} For binary matroids $\M$ and $\N$ a map $\tau: E(\M) \to E(\N)$ is a matroid homomorphism if and only if it defines a linear map of point spaces,  if and only if for all circuits $Z$ of $\M$ 
  \[ \tau(Z) := \sum_{e \in Z} \tau(e)  = 0. \] 
 \end{fact}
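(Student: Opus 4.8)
The plan is to prove the chain of equivalences by unwinding the definitions against a fixed pair of binary representations, say an $m\times|E(\M)|$ matrix $A$ of $\M$ and a $k\times|E(\N)|$ matrix $B$ of $\N$, writing $A_e$ and $B_f$ for the columns indexed by $e\in E(\M)$ and $f\in E(\N)$. I will use the standard fact about binary matroids that a subset $W$ of the ground set is a disjoint union of circuits (equivalently, lies in the cycle space) if and only if the columns indexed by $W$ sum to $0$ over $\Z_2$, that this condition does not depend on the chosen representation, and that $P(\M)=\operatorname{colspan}(A)$ together with the map $\chi_S\mapsto\sum_{e\in S}A_e$ realises $P(\M)$ as $\Z_2^{E(\M)}$ modulo the cycle space of $\M$ (and likewise for $\N$).

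First I would dispose of the equivalence between the matroid-homomorphism condition of \Cref{def:hom} and the sum condition. For a circuit (indeed any subset) $Z$ of $\M$, the set $\tau(Z)$ appearing in \Cref{def:hom} is, by the linear-extension convention, the symmetric difference $\bigtriangleup_{e\in Z}\{\tau(e)\}$, that is, the set of those $f\in E(\N)$ with $|\{e\in Z:\tau(e)=f\}|$ odd. Since summands repeated an even number of times cancel over $\Z_2$, one has $\sum_{e\in Z}\tau(e)=\sum_{f\in\tau(Z)}B_f$ in $P(\N)$. Hence ``$\tau(Z)$ is a disjoint union of circuits of $\N$'' is exactly ``$\sum_{f\in\tau(Z)}B_f=0$'', which is exactly ``$\sum_{e\in Z}\tau(e)=0$''. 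Ranging over all circuits $Z$ then gives the equivalence of the first and third conditions; the one point that needs care here is that $\tau$ may be non-injective on $Z$, which is precisely what the $\Z_2$-cancellation handles.

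Next I would treat the equivalence with the point-space condition. For the easy direction, if $\tau$ extends to a linear map $\hat\tau\colon P(\M)\to P(\N)$ with $\hat\tau(A_e)=B_{\tau(e)}$, then for every circuit $Z$ we get $\sum_{e\in Z}\tau(e)=\hat\tau\bigl(\sum_{e\in Z}A_e\bigr)=\hat\tau(0)=0$, since $\sum_{e\in Z}A_e=0$ as $Z$ is a cycle; so the third condition holds. Conversely, assuming the third condition, it holds for every cycle of $\M$ (a disjoint union of circuits) by additivity, and I would define $\hat\tau$ on $P(\M)=\operatorname{colspan}(A)$ by choosing, for each $v$, a set $S\subseteq E(\M)$ with $v=\sum_{e\in S}A_e$ and setting $\hat\tau(v)=\sum_{e\in S}B_{\tau(e)}$; well-definedness is precisely the vanishing of $\sum_{e\in W}\tau(e)$ for the cycle $W=S\bigtriangleup S'$ coming from two such representations of $v$, and $\Z_2$-linearity of $\hat\tau$ follows because $S\bigtriangleup S'$ represents $v+v'$. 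Again the cancellation over $\Z_2$ is doing the work.

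Finally I would remark that all three conditions are representation-independent: the first is literally \Cref{def:hom}, the third says via the identity above that $\tau(Z)$ is a disjoint union of circuits of $\N$ for every circuit $Z$ of $\M$ — a statement about the matroids, not their matrices — and the second is now equivalent to these, so its truth does not depend on the choice of $A$ and $B$ even though the witnessing map $\hat\tau$ is built from them. I expect the only genuine subtlety in the whole argument, and the thing I would be most careful about, is keeping straight the two readings of ``$\tau(Z)$'' — the symmetric-difference subset of $E(\N)$ from \Cref{def:hom} and the vector sum $\sum_{e\in Z}\tau(e)$ in $P(\N)$ — and checking they agree even when $\tau$ collapses several points of $Z$ onto one point of $\N$.
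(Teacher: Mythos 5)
Your proof is correct and follows essentially the same route as the paper, which states this as a Fact without formal proof, relying on the preceding discussion that a set of points is a disjoint union of circuits iff the corresponding columns sum to $0$ over $\Z_2$. The details you supply — the $\Z_2$-cancellation reconciling the symmetric-difference reading of $\tau(Z)$ with the vector sum when $\tau$ is non-injective, and the well-definedness of the linear extension $\hat\tau$ on $P(\M)$ — are exactly the right ones to fill in.
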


Recall from \Cref{def:matrecgraph} that two homomorphisms in $\mCol(\M,\N)$
are adjacent if and only if they differ on a cocircuit of $C$ of $\M$.  The following   shows that if this happens, they must differ on that cocircuit on a constant.  We note that Proposition 4.1 of \cite{DNR06} gives the following two facts in the case that the matroid $\N$ is a vector space.

 \begin{fact}\label{ConstOnCocirc}
 Let $\tau, \tau': \M \to \N$ be matroid homomorphisms for binary matroids $\M, \N$, and let $C$ be a cocircuit of $\M$.
 Then the following are equivalent.
 \begin{enumerate}
  \item For $e\in E(\M)$, $\tau(e) \neq \tau'(e)$ if and only if $e \in C$.
  \item There exists a non-zero vector $c \in P(\N)$ such that 
 \[ \tau' = \tau + c \chi_C:= \PWFO{\tau(e) + c}{e \in C}{\tau(e)}{.} \]
 \end{enumerate}
 \end{fact}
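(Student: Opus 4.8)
The plan is to prove the two implications separately, with essentially all of the content lying in $(1)\Rightarrow(2)$. For $(2)\Rightarrow(1)$ there is nothing to do: if $\tau' = \tau + c\chi_C$ for some nonzero $c\in P(\N)$, then for $e\in C$ we get $\tau'(e)=\tau(e)+c\neq\tau(e)$ precisely because $c\neq 0$, while for $e\notin C$ we get $\tau'(e)=\tau(e)$; this is exactly the assertion that $\tau$ and $\tau'$ differ on $C$ and nowhere else. (Only that $\tau$ is a homomorphism is used here, although both are hypothesised to be.)

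For $(1)\Rightarrow(2)$, the one genuine ingredient is the classical interaction between a circuit and a cocircuit: \emph{if $C$ is a cocircuit of $\M$ and $e\neq f$ both lie in $C$, then some circuit $Z$ of $\M$ satisfies $Z\cap C=\{e,f\}$.} I would either cite this from a standard matroid reference or include the short argument: $H:=E(\M)\setminus C$ is a hyperplane, so a basis $B$ of $H$ together with $e$ is a basis of $\M$; since $f\notin H\supseteq B$ and $f\neq e$, the fundamental circuit $Z$ of $f$ with respect to $B\cup\{e\}$ is well defined and contained in $B\cup\{e,f\}\subseteq H\cup\{e,f\}$, so it meets $C$ only in $e$ and $f$; moreover it must contain $e$, for otherwise $Z\setminus\{f\}\subseteq B$ would force $f\in\mathrm{cl}(B)\subseteq\mathrm{cl}(H)=H$, a contradiction.

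Granting the claim, I would fix a binary representation of $\N$ and work with the vectors $\tau(e),\tau'(e)\in P(\N)$. Set $\delta(e):=\tau(e)+\tau'(e)$, so by $(1)$ we have $\delta(e)\neq 0$ exactly when $e\in C$. Choose $e_0\in C$ and put $c:=\delta(e_0)$, which is nonzero since $e_0\in C$. For an arbitrary $f\in C$, take a circuit $Z$ with $Z\cap C=\{e_0,f\}$. Because $\tau$ and $\tau'$ are homomorphisms, $\sum_{g\in Z}\tau(g)=0=\sum_{g\in Z}\tau'(g)$; adding these two equations over $\Z_2$ gives $\sum_{g\in Z}\delta(g)=0$, and since $\delta(g)=0$ for every $g\in Z\setminus\{e_0,f\}$ this collapses to $\delta(e_0)+\delta(f)=0$, i.e.\ $\delta(f)=c$. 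Hence $\tau'(e)=\tau(e)+c$ for all $e\in C$ and $\tau'(e)=\tau(e)$ otherwise, which is $(2)$, with $c\neq 0$ and $c\in P(\N)$ as a sum of two of its elements.

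The only step that is not pure bookkeeping is the circuit–cocircuit claim, and even that is classical; after it is in hand the argument is just the matroid rendering of the fact that the difference of two $\Z_2$-tensions is supported on a cut and therefore constant across it. I would also note that no hypothesis beyond $\tau,\tau'$ being homomorphisms is used, and that loops cause no trouble, since a loop lies in no cocircuit and contributes $0$ to every sum above.
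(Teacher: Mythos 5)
Your proof is correct, and it reaches $(1)\Rightarrow(2)$ by a genuinely different route than the paper. The paper argues by contradiction: writing $\delta=\tau-\tau'$, it supposes $\delta$ takes two distinct values on $C$, picks a coordinate separating them, and lets $X\subseteq C$ be the set of points of $C$ whose $\delta$-value has a $1$ in that coordinate; the same circuit computation you use ($\sum_{e\in Z}\delta(e)=0$ for every circuit $Z$) shows $|X\cap Z|$ is even for all $Z$, so $X$ is a nonempty disjoint union of cocircuits properly contained in the cocircuit $C$, contradicting minimality. You instead invoke (and prove, via the hyperplane $E(\M)\setminus C$ and a fundamental circuit) the classical pairing lemma that any two elements $e_0,f$ of a cocircuit lie on a circuit $Z$ with $Z\cap C=\{e_0,f\}$, and then the same sum over $Z$ collapses to $\delta(f)=\delta(e_0)$ directly. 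The shared engine is the identity $\sum_{e\in Z}\delta(e)=0$; what differs is the matroid input: the paper leans on the binary orthogonality fact that a set meeting every circuit evenly is a cocycle, together with minimality of $C$, while you lean on the circuit--cocircuit pairing lemma. Your version is direct rather than by contradiction and is self-contained (it even dispenses with the paper's coordinate-picking step); the paper's is shorter if one takes the cocycle characterization as known, and it reuses that characterization elsewhere (e.g.\ in \Cref{compose}). Your closing remarks about $c$ lying in $P(\N)$, the single-element case of $C$, and loops being harmless are all accurate.
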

 \begin{proof}
 It is easy to see that (2) implies (1) as $c$ is a non-zero vector.
 For the converse, let us assume (1).
 Let $\tau: E(M) \to P(\N)$ be given by $\tau(e):=\tau(e)-\tau'(e)$.
 Then (1) implies that $\tau(e) \neq 0$ if and only if $e \in C$.
 Now, let us assume that there exist points $e_1,e_2 \in C$ such that $\tau(e_1) \neq \tau(e_2)$.
 Without loss of generality, we may assume the first entries of $\tau(e_1)$, and $\tau(e_2)$ as columns in $P(\N)$  are $1$ and $0$, respectively.
 Let $X$ be the set of all $e \in C$ such that the first entry of $\tau(e)$ is $1$.
 As $\tau$ and $\tau'$ are matroid homomorphisms from $\M$ to $\N$, for any circuit $Z$ of $\M$,
 \begin{align*}
  \Sigma_{e \in C \cap Z} \tau(e) &= \Sigma_{e \in Z} \tau(e)\\
  &= \Sigma_{e \in Z} (\tau(e) - \tau'(e)) \\ &= \tau(Z) - \tau'(Z) = 0 - 0 \\ &= 0 
 \end{align*}
 This implies that $|X \cap Z|$ is even. 
 Our choice of $Z$ is arbitrary, so $X$ is a disjoint union of cocircuits of $\M$.
 Since $X$ is non-empty and $X \subseteq C$, we have $X=C$, contradicting the fact that $\tau(e_2) \in C-X$.
 \end{proof}

 So two maps are adjacent in $\mCol(\M,\N)$ if and only if they differ by a constant on a cocircuit.
 The following complements this, saying that we can get a new homomorphism from an old one by adding a constant on a cocircuit as long as doing so is well defined. 

 \begin{fact}\label{fact:buildhom}
  Let $\M = \Mat(A)$ be a binary matroid, $\tau: \M \to \N$ be a matroid homomorphism, $C$ be a cocircuit of $\M$, and $c \in P(\N)$ be such that $\tau(e) + c$ is in $E(\N)$      for every $e \in C$. The map $\tau' = \tau + c\bone{C}$ 
   is also a matroid homomorphism, and is adjacent to $\tau$ in $\mCol(\M,\N)$. 
 \end{fact}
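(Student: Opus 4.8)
The plan is to verify the two assertions of the statement in turn: first that $\tau' = \tau + c\bone{C}$ is a well-defined matroid homomorphism $\M \to \N$, and then that it is adjacent to $\tau$ in $\mCol(\M,\N)$.

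For well-definedness, I would simply check that $\tau'$ maps into $E(\N)$: for $e \notin C$ we have $\tau'(e) = \tau(e) \in E(\N)$, and for $e \in C$ we have $\tau'(e) = \tau(e) + c \in E(\N)$ by the hypothesis. To see that $\tau'$ is a homomorphism I would use the characterisation recalled in the Fact just before \Cref{ConstOnCocirc}: it suffices to show that $\sum_{e\in Z}\tau'(e) = 0$ in $P(\N)$ for every circuit $Z$ of $\M$. Expanding and using that $\tau$ is a homomorphism,
\[ \sum_{e\in Z}\tau'(e) \;=\; \sum_{e\in Z}\tau(e) \;+\; \sum_{e\in Z\cap C} c \;=\; \tau(Z) + |Z\cap C|\,c \;=\; |Z\cap C|\,c . \]
The key step is the standard fact that in a binary matroid a circuit meets a cocircuit in an even number of points — equivalently, the indicator vector of $C$ lies in the cocycle space, which is the orthogonal complement over $\Z_2$ of the cycle space, and hence is orthogonal to the indicator vector of every circuit $Z$. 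Thus $|Z\cap C|$ is even, so $|Z\cap C|\,c = 0$ over $\Z_2$, and $\tau'$ is a matroid homomorphism.

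For the adjacency claim I would argue as follows. If $c = 0$ then $\tau' = \tau$ and there is nothing to prove, so assume $c \neq 0$. Then $\tau$ and $\tau'$ agree on every $e \notin C$, while for $e \in C$ we have $\tau'(e) = \tau(e) + c \neq \tau(e)$; hence $\tau$ and $\tau'$ differ on exactly the cocircuit $C$, and the implication $(2)\Rightarrow(1)$ of \Cref{ConstOnCocirc} gives that $\tau$ and $\tau'$ are adjacent in $\mCol(\M,\N)$.

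I do not expect a genuine obstacle in this argument; the only point that requires care is the invocation of the circuit–cocircuit parity property of binary matroids, which is precisely what forces the added constant $c$ to cancel out on every circuit.
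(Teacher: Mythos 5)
Your proposal is correct and takes essentially the same route as the paper, whose entire proof is the one-line observation that $|Z\cap C|$ is even for every circuit $Z$ of $\M$; you have simply written out that computation explicitly and added the routine adjacency check via \Cref{ConstOnCocirc}.
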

 \begin{proof}
  This is trivial as for every circuit $Z$ of $\M$, $|Z \cap C|$ is even.
 \end{proof} 

 Again, the $c$ in the notation $\tau + c\bone{C}$ depends on the representation $A$ of $\N$, but the fact that there is such a $c$ defining the edge between two maps in $\mCol(\M,\N)$ does not.   We say that we get $\tau+c\bone{C}$ from $\tau$ by {\em adding $c$ to $\tau$ on $C$}. 


 \begin{example}\label{example:PG} 
  Recall that for $t \geq 0$ the matroid represented by the matrix over $\Z_2^t$ consisting of all $2^t$ possible columns is called the {\em looped projective geometry} $PG^\ell(t,2)$, or is just called a vector space. 
 
 Where $PG = PG^\ell(t,2)$ for any $t \geq 0$ we show that $\mCol(\M,PG)$ is connected for all $\M$, and so $\mRecol(PG)$ is trivial. Indeed, for any $\tau$ and $\sigma$ in $\mCol(\M,PG)$ there is a path between them of length at most the number of elements of a basis $T$ of $PG$ on which $\tau$ and $\sigma$ differ. 

  The statement is clear if $t = 0$, so assume that $t \geq 1$. Pick any basis $T$ of $\M$ and order its elements: $b_1, \dots, b_n$. Let $\tau_0 = \tau$ and assume that we have a path from $\tau_0$ to a vertex $\tau_{i-1}$ that agrees with $\tau$ on $\{b_1, \dots, b_{i-1}\}$. The set $C_i$ of elements $f$ of $\M \setminus \{f\}$ that such that $T \cup \{f\}$ contains a circuit $Z_f$ containing $b_i$, is a cocircuit of $\M$. The map \[ \tau_i = \tau_{i-1} + (\sigma(e)-\tau_{i-1}(e))\bone{C_i}, \] which is a homomorphism by \Cref{fact:buildhom}, 
  is adjacent to $\tau_{i-1}$ and agrees with $\sigma$ on one more element of $T$. 
 By induction, we get a path to the homomorphism $\tau_n$ that agrees with $\sigma$ on all elements of $T$, so is $\sigma$. 
 \end{example}

 \section{Induced homomorphsims, retractions, and dismantling}\label{sect:Induced}

 Recall that a problem $A$ {\em polynomially reduces} to a problem $B$ if for an instance $I_A$ of $A$, we can construct an instance $I_B$ of $B$ that is of size polynomial in that of $I_A$, such that $I_A$ is a YES instance of $A$ if and only if $I_B$ is a YES instance of $B$. In this case we write $A \leq_{\Poly} B$. We say, informally, that $B$ is at least as hard as $A$, which means, in particular, that $A$ is polynomially time solvable if $B$ is, and $B$ is $\PSPACE$-hard if $A$ is. If $A \leq_{\Poly} B \leq_{\Poly} A$ then the problems are {\em polynomially equivalent} and we write $A =_{\Poly} B$.

 A basic fact in the theory of graph recolourings which comes from \cite{BW00} is the fact that a homomorphism $f: H \to H'$ induces a homomorphism $f^*: \gCol(G, H) \to \gCol(G,H')$.  If the homorphism is a retraction, it induces a polynomial reduction $\gRecol(H) \leq_{\Poly} \gRecol(H')$, and if it is a dismantling retraction it induces a polynomial equivalence of $\gRecol(H) =_{\Poly} \gRecol(H')$. We observe here that the same holds for matroid homomorphisms.


 For matroid homomorphisms $\alpha:  L \to M$ and $\beta: \M \to \N$ we get, of course, that $\beta \circ \alpha: L \to \N$ is a matroid homomorphism. 
 This extends naturally to walks in the colouring graphs in a couple of ways.

 \begin{lemma}\label{compose}

 Let $\alpha:  L \to M$ and $\beta: \M \to \N$  be matroid homomorphisms.  
  \begin{enumerate}
       \item If there is a walk $A$ between $\alpha$ and $\alpha'$ in $\mCol(L,\M)$, then there is a walk between $\beta \circ \alpha$ and $\beta \circ \alpha'$ in $\mCol(L,\N)$.
               We call this walk $\beta \circ A$.
       \item If there is a walk $B$ between $\beta$ and $\beta'$ in $\mCol(\M,\N)$, then there is a walk  between $\beta \circ \alpha$ and $\beta' \circ \alpha$ in $\mCol(L,\N)$.                  We call this walk $B \circ \alpha$.
    
   \end{enumerate}
                              
 \end{lemma}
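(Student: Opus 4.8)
The plan is to handle the two parts separately, but by essentially the same mechanism: a single adjacency step in the source colouring graph is transported to a single adjacency step (or a trivial step) in the target colouring graph, and then we concatenate along the walk. Throughout I will use the characterisation from \Cref{ConstOnCocirc}: two homomorphisms are adjacent in a colouring graph if and only if they differ by adding a fixed constant $c$ on a single cocircuit $C$.

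For part (1), suppose $\alpha_j$ and $\alpha_{j+1}$ are consecutive vertices of the walk $A$ in $\mCol(L,\M)$. By \Cref{ConstOnCocirc} there is a cocircuit $C$ of $L$ and a non-zero $c \in P(\M)$ with $\alpha_{j+1} = \alpha_j + c\bone{C}$. I want to show $\beta \circ \alpha_{j+1}$ equals $\beta \circ \alpha_j$ on $E(L) \setminus C$ and differs from it on (a subset of) $C$ by a constant. Off $C$ this is immediate since $\alpha_j$ and $\alpha_{j+1}$ agree there. On $C$, the subtle point is that $\beta$ is a priori only defined on $E(\M)$, not on all of $P(\M)$; but a matroid homomorphism extends linearly to point spaces (this is exactly the content of the \Cref{def:hom} discussion and the \textbf{Fact} following it in Section~\ref{sect:tech}), so $\beta$ may be regarded as a linear map $P(\M) \to P(\N)$. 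Hence for $e \in C$, $\beta(\alpha_{j+1}(e)) = \beta(\alpha_j(e) + c) = \beta(\alpha_j(e)) + \beta(c)$, where $\beta(c) \in P(\N)$ is independent of $e$. Thus $\beta \circ \alpha_{j+1} = (\beta\circ\alpha_j) + \beta(c)\,\bone{C}$. If $\beta(c) \neq 0$, then by \Cref{fact:buildhom} (the required values $\beta(\alpha_j(e)) + \beta(c) = \beta(\alpha_{j+1}(e))$ are genuine points of $\N$ since $\beta \circ \alpha_{j+1}$ is a homomorphism) this is an edge of $\mCol(L,\N)$; if $\beta(c) = 0$ the two maps coincide and we simply omit this step. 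Concatenating over $j$ gives the desired walk $\beta \circ A$.

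For part (2), let $\beta_j$ and $\beta_{j+1}$ be consecutive on the walk $B$ in $\mCol(\M,\N)$, so $\beta_{j+1} = \beta_j + c\bone{C'}$ for a cocircuit $C'$ of $\M$ and non-zero $c \in P(\N)$. Composing with $\alpha$ on the right, for $f \in E(L)$ we have $\beta_{j+1}(\alpha(f)) = \beta_j(\alpha(f))$ unless $\alpha(f) \in C'$, in which case it equals $\beta_j(\alpha(f)) + c$. So set $D = \alpha^{-1}(C') = \{ f \in E(L) : \alpha(f) \in C'\}$; then $\beta_{j+1}\circ\alpha = (\beta_j \circ \alpha) + c\,\bone{D}$. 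The obstacle here is that $D$ need not be a cocircuit of $L$, nor even a disjoint union of cocircuits in general --- so a single step of $B$ could a priori correspond to a "jump" that is not a legal move. The resolution is to observe that $D$ \emph{is} a disjoint union of cocircuits of $L$: for any circuit $Z$ of $L$, $\alpha(Z)$ is a disjoint union of circuits of $\M$, and each such circuit meets the cocircuit $C'$ in an even number of points, so $|\alpha(Z) \cap C'|$ is even; pulling back, $|Z \cap D|$ has the same parity as $|\alpha(Z) \cap C'|$ counted with multiplicity --- more precisely, one shows $\sum_{f \in Z \cap D}$ counts the preimages landing in $C'$, and since $\alpha$ is linear and $\alpha(Z) = 0$ in $P(\M)$, the restriction argument of \Cref{ConstOnCocirc} applies with $\tau - \tau'$ replaced by the indicator of $D$. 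Hence $D$ meets every circuit evenly, so $D$ is a disjoint union of cocircuits $D = D_1 \sqcup \cdots \sqcup D_k$ of $L$. Adding $c$ on $D_1$, then on $D_2$, and so on, each step is legal by \Cref{fact:buildhom} (each intermediate map is a homomorphism because adding $c$ on a disjoint union of cocircuits preserves the circuit-sum condition, and the final map $\beta_{j+1}\circ\alpha$ has all its values in $E(\N)$), and after $k$ steps we reach $\beta_{j+1}\circ\alpha$. Concatenating over $j$ gives the walk $B \circ \alpha$.

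The main obstacle, then, is the one appearing in part (2): controlling the preimage $\alpha^{-1}(C')$ of a cocircuit and showing it decomposes into cocircuits, which is where the homomorphism property of $\alpha$ (cycles to cycles, equivalently linearity on point spaces) does the real work, via the same even-intersection argument used in the proof of \Cref{ConstOnCocirc}. Part (1) is essentially formal once one remembers that $\beta$ acts linearly on point spaces, so that a constant shift pushes through the composition to a constant shift.
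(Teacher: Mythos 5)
Your proof is correct and follows essentially the same route as the paper's: for (1) push the constant through the linear extension of $\beta$ to get $\beta\circ\alpha' = \beta\circ\alpha + \beta(c)\bone{C}$, and for (2) pull the cocircuit back through $\alpha$ to a cocycle of $L$, decompose it into cocircuits, and recolour one at a time. Your extra care about the degenerate case $\beta(c)=0$ and your justification that $\alpha^{-1}(C')$ meets every circuit evenly are details the paper elides but do not change the argument.
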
 
 \begin{proof}
  For the first statement, it is enough to consider the case that $\alpha \sim \alpha'$. We write 
  $\alpha' = \alpha + c\bone{C}$ for some constant $c \in E(\M)$ and some cocircuit $C$ of $L$.
  To be pedantic, we write this as 
      \[ \alpha'(e) = \alpha(e) + \PWF{c}{e \in C}{0}{e \not\in C}. \] 
  By the linearity of $\beta$ we have 
   \[ [\beta \circ \alpha'](e) = [\beta \circ \alpha](e) + \PWF{\beta(c)}{e \in C}{0}{e \not\in C},\]
  which we can write as $\beta \circ \alpha' = \beta \circ \alpha + \beta(c)\bone{C}$. Thus $\beta \circ \alpha \sim \beta \circ \alpha'$ in $\mCol(L, \N)$ as needed. 

  For the second statement, we write $\beta' = \beta + c \bone{C}$, which allows us to write 
  \[ [\beta' \circ \alpha](e) = [\beta \circ \alpha](e) + \PWF{c}{\alpha(e) \in C}{0}{\alpha(e) \not\in C},\]
  and so $\beta' \circ \alpha = (\beta \circ \alpha) + c\bone{\alpha^{-1}(C)}$. As the pre-image $\alpha^{-1}(C)$ of a cocircuit $C$ is a cocycle, so a disjoint union of cocircuits, 
  we get a path from $\beta' \circ \alpha$ to $\beta \circ \alpha$ by recolouring one cocircuit at at time, as needed. 
 \end{proof}

 In particular, part (1) not only takes paths to paths, but takes edges to edges, so  gives us the following. 

 \begin{fact}\label{fact:dualhom}
 For a matroid homomorphism $f: \N \to \N'$, and a matroid $\M$, the induced map $f^*: \mCol(\M, \N) \to \mCol(\M,\N')$ defined by $f^*(\tau) = f \circ \tau$ 
 is a graph homomorphism. 
 \end{fact}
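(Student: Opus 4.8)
The plan is to obtain this immediately from part (1) of \Cref{compose}. First I would observe that $f^*$ is well defined on vertices: for any $\tau \in \mCol(\M,\N)$ the composite $f \circ \tau$ is a composition of matroid homomorphisms, hence itself a matroid homomorphism $\M \to \N'$, i.e.\ a vertex of $\mCol(\M,\N')$. The only real content is then that $f^*$ carries edges to edges, and for this I would apply the first statement of \Cref{compose} with $L := \M$, with the intermediate matroid taken to be $\N$, with target $\N'$, and with $\beta := f$. Its proof does exactly what is needed: it shows that if $\alpha \sim \alpha'$ then $\beta \circ \alpha \sim \beta \circ \alpha'$, so the walk it builds from a single edge is again a single edge.

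Spelling this out, if $\tau \sim \tau'$ in $\mCol(\M,\N)$ then by \Cref{ConstOnCocirc} we may write $\tau' = \tau + c\bone{C}$ for a cocircuit $C$ of $\M$ and a non-zero $c \in P(\N)$; linearity of $f$ as a map of point spaces then gives $f \circ \tau' = (f \circ \tau) + f(c)\bone{C}$, so $f^*(\tau)$ and $f^*(\tau')$ agree off $C$ and differ by the constant $f(c)$ on the cocircuit $C$, hence are adjacent in $\mCol(\M,\N')$. That is the whole argument, and I do not expect any genuine obstacle here — it is a direct unwinding of the linearity already used in \Cref{compose}.

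The one point worth flagging, although it changes nothing, is the degenerate case $f(c) = 0$: a matroid homomorphism need not be injective on points — it may collapse two points of $\N$ to a single point, or to a loop, of $\N'$ — and then $f \circ \tau = f \circ \tau'$, so an edge of $\mCol(\M,\N)$ is sent to a vertex. Thus $f^*$ is, to be pedantic, a homomorphism into the reflexive closure of $\mCol(\M,\N')$; this is harmless for every later use, since it still sends walks to walks and hence preserves connectivity, which is all that is needed for the reductions $\mRecol(\N) \leq_{\Poly} \mRecol(\N')$ obtained from $f^*$.
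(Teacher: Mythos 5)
Your proof is correct and is essentially the paper's own argument: the paper derives this fact directly from part (1) of \Cref{compose}, whose proof is exactly the computation $f\circ\tau' = f\circ\tau + f(c)\bone{C}$ that you spell out. Your caveat about the degenerate case $f(c)=0$ (possible when $f$ identifies points or maps into a loop) is well taken --- the paper glosses over it, and strictly speaking $f^*$ is then only a homomorphism into the reflexive closure of $\mCol(\M,\N')$ --- but, as you note, this is harmless because every later use of the fact only requires that walks map to walks.
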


 A surjective homomorphism $r: \M \to \M'$ is a {\em retraction} if there is a homomorphism $c: \M' \to \M$ such that $r \circ c$ is the identity on $\M'$. If this exists then $c$ is an isomorphism, and identifying $\M'$ with $c(\M')$ we can view $\M'$ as a submatroid of $\M$ that is fixed by $r$. A matroid $\M'$ is a {\em retract} of $\M$ if there is a retraction $r: \M \to \M'$. 

 \begin{proposition}\label{prop:MRetEquiv}
  If a matroid $N'$ is a retract of $N$, then there is a path between two vertices of $\mCol(\M,\N')$ if and only if there is a path between them in $\mCol(\M,\N)$. In particular $\mRecol(N') \leq_{\Poly} \mRecol(N)$.
 \end{proposition}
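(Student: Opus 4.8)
The plan is to read both implications off \Cref{fact:dualhom} (equivalently \Cref{compose}(1)), which says that a matroid homomorphism $f:\N\to\N''$ induces a \emph{graph} homomorphism $f^*:\mCol(\M,\N)\to\mCol(\M,\N'')$, $\tau\mapsto f\circ\tau$, for every matroid $\M$. Fix a retraction $r:\N\to\N'$ together with a section $c:\N'\to\N$ satisfying $r\circ c=\id_{\N'}$, and, as in the paragraph preceding the proposition, identify $\N'$ with the submatroid $c(\N')$ of $\N$, so that $c$ becomes an inclusion and $r$ fixes $\N'$ pointwise. Under this identification a homomorphism $\tau:\M\to\N'$ is also a homomorphism $\M\to\N$, and $\mCol(\M,\N')$ is the subgraph of $\mCol(\M,\N)$ on the homomorphisms whose image lies inside $\N'$: adjacency in both graphs is the same condition of \Cref{def:matrecgraph}, namely differing exactly on some cocircuit of $\M$, and this condition does not refer to the ambient matroid. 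So two vertices $\tau,\sigma$ of $\mCol(\M,\N')$ are also vertices of $\mCol(\M,\N)$ and the statement makes sense.

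For the forward implication, a path from $\tau$ to $\sigma$ in $\mCol(\M,\N')$ is its own image under $c^*$ and hence, viewed inside $\mCol(\M,\N)$, is a walk from $\tau$ to $\sigma$ there; a walk between two vertices of a graph contains a path between them. For the reverse implication, let $P$ be a path from $\tau$ to $\sigma$ in $\mCol(\M,\N)$. Applying the graph homomorphism $r^*:\mCol(\M,\N)\to\mCol(\M,\N')$ of \Cref{fact:dualhom} sends $P$ to a walk $r^*(P)$ in $\mCol(\M,\N')$; its endpoints are $r^*(\tau)=r\circ\tau=\tau$ and likewise $r^*(\sigma)=\sigma$, since $r$ fixes $\N'$ pointwise, so $r^*(P)$ contains a path from $\tau$ to $\sigma$ in $\mCol(\M,\N')$. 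This proves the equivalence.

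For the ``in particular'' clause, reduce an instance $(\M,\tau,\sigma)$ of $\mRecol(\N')$ to the instance $(\M,c\circ\tau,c\circ\sigma)$ of $\mRecol(\N)$. Since $\N$, and hence $c$, is fixed, this instance is computable in time linear in the input and has size linear in it; and by the equivalence just established it is a YES instance of $\mRecol(\N)$ if and only if $(\M,\tau,\sigma)$ is a YES instance of $\mRecol(\N')$. There is no real obstacle here; the only point needing care is the bookkeeping of the identification $\N'=c(\N')$ and the resulting observation that $\mCol(\M,\N')$ embeds into $\mCol(\M,\N)$ compatibly with $c^*$ and $r^*$, after which everything is immediate from \Cref{fact:dualhom}.
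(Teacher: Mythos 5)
Your proof is correct and takes essentially the same route as the paper: the forward direction is the observation that a path in $\mCol(\M,\N')$ is already a path in $\mCol(\M,\N)$ under the identification $\N'=c(\N')$, and the reverse direction applies $r^*$ from \Cref{fact:dualhom} and uses that $r$ fixes $\N'$ pointwise. Your extra care about walks versus paths and about the explicit polynomial reduction only makes the argument slightly more detailed than the paper's.
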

 \begin{proof}
  A path between vertices of $\mCol(\M,\N')$ is a path between them in $\mCol(\M,\N)$. On the other hand, by \Cref{fact:dualhom}, a path between them in $\mCol(\M,\N)$ maps, by $r^*$ where $r: \N \to \N'$ is the retraction, to a path between them in $\mCol(\M,\N')$.
 \end{proof}

  A retraction $r \in \mCol(\N,\N)$ is a {\em dismantling retraction} if it is adjacent to the identity map in $\mCol(\N,\N)$. 
  Recall that a dismantling retraction $r: H \to H$ of graphs is any retraction whose image $H'$ has one fewer vertices than $H$. These induce dismantling retractions on the graphic matroids, so our definition can be viewed as a generalisation of the definition of a dismantling retraction for graphs.

  \begin{proposition}\label{prop:dismant}
   If $r: \N \to \N'$ is a dismantling retraction, then there is a path between two vertices $\tau$ and $\tau'$ of $\mCol(\M,\N)$ if and only if there is a path between $r \circ \tau$ and $r \circ \tau'$ in $\mCol(\M,\N')$. In particular, $\mRecol(\N') \equiv_{\Poly} \mRecol(\N)$.   
  \end{proposition}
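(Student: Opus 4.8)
The plan is to bootstrap from \Cref{prop:MRetEquiv}, using the extra strength of the dismantling hypothesis, which is exactly what \Cref{compose}(2) is built to exploit.

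The ``only if'' direction is immediate from the retraction structure alone. If $\tau$ and $\tau'$ lie in the same component of $\mCol(\M,\N)$, then, writing $r$ also for the retraction $\N \to \N'$, the induced graph homomorphism $r^*\colon \mCol(\M,\N) \to \mCol(\M,\N')$ of \Cref{fact:dualhom} carries a path between them to a walk between $r^*(\tau) = r\circ\tau$ and $r^*(\tau') = r\circ\tau'$; so $r\circ\tau$ and $r\circ\tau'$ lie in the same component of $\mCol(\M,\N')$.

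For the ``if'' direction, first observe that for \emph{every} matroid homomorphism $\alpha\colon\M\to\N$ there is a path in $\mCol(\M,\N)$ from $\alpha$ to $r\circ\alpha$. Indeed, since $r$ is a dismantling retraction it is adjacent to $\id_\N$ in $\mCol(\N,\N)$, so the single edge $B$ from $\id_\N$ to $r$ is a walk in $\mCol(\N,\N)$, and \Cref{compose}(2) applied to $B$ and $\alpha$ produces a walk $B\circ\alpha$ in $\mCol(\M,\N)$ from $\id_\N\circ\alpha = \alpha$ to $r\circ\alpha$. Now assume there is a path $P'$ from $r\circ\tau$ to $r\circ\tau'$ in $\mCol(\M,\N')$. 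Identifying $\N'$ with the submatroid of $\N$ fixed by $r$, the path $P'$ is, exactly as in the proof of \Cref{prop:MRetEquiv}, also a path from $r\circ\tau$ to $r\circ\tau'$ in $\mCol(\M,\N)$. Concatenating the path from $\tau$ to $r\circ\tau$, then $P'$, then the path from $r\circ\tau'$ to $\tau'$ yields a walk in $\mCol(\M,\N)$ from $\tau$ to $\tau'$, as required.

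Finally, the polynomial equivalence. The inequality $\mRecol(\N')\leq_\Poly\mRecol(\N)$ is \Cref{prop:MRetEquiv}. For the reverse, send an instance $(\M,\tau,\sigma)$ of $\mRecol(\N)$ to the instance $(\M,r\circ\tau,r\circ\sigma)$ of $\mRecol(\N')$: this has the same underlying matroid, is computable in polynomial time since $r$ is a fixed finite map, and by the equivalence just proved it is a YES instance if and only if $(\M,\tau,\sigma)$ is. Hence $\mRecol(\N)\leq_\Poly\mRecol(\N')$, and the two problems are polynomially equivalent. I do not expect a genuine obstacle here: the argument is a gluing of three paths. The only points requiring care are the identification of $\N'$ with a retract-submatroid of $\N$, so that a path in $\mCol(\M,\N')$ is literally a path in $\mCol(\M,\N)$, and the use of \Cref{compose}(2) to promote the single edge $\id_\N\sim r$ to a connection between $\alpha$ and $r\circ\alpha$ for an arbitrary colouring $\alpha$ — this is the one place where the dismantling hypothesis, rather than plain retraction, is used.
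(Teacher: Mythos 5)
Your proposal is correct and follows the paper's own argument: the forward direction via the induced homomorphism $r^*$ of \Cref{fact:dualhom}, and the reverse direction by using \Cref{compose}(2) on the edge $\id_\N \sim r$ to connect each $\alpha$ to $r\circ\alpha$, then gluing through the path in $\mCol(\M,\N')$ viewed inside $\mCol(\M,\N)$. Your explicit treatment of the polynomial equivalence is a reasonable elaboration of what the paper leaves implicit.
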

  \begin{proof}
  By \Cref{fact:dualhom}, we have that a path between $\tau$ and $\tau'$ in $\mCol(\M,\N)$ maps by $r^*$ to a path between $r \circ \tau$ and $r \circ \tau'$ in $\mCol(\M,\N')$. 
  
  To see that a path between $r \circ \tau$ and $r \circ \tau'$
  gives a path between $\tau$ and $\tau'$ it is enough to show, for any $\tau \in \mCol(\M,\N)$, that there is a path from $\tau$ to $r \circ \tau$ in $\mCol(\M,\N)$. But this is immediate from part (2) of \Cref{compose} from the fact that $\id \sim r$ in $\mCol(\N, \N)$. 
  \end{proof}

  If there is a series of dismantling retractions whose composition takes a matroid $\N$ to a retract $\N'$ then we say that $\N$ dismantles to $\N'$. 
  
 \begin{corollary}\label{smallcliques}
  If $\N$ dismantles to the loop $M^\ell(K_1)$ or the edge $M(K_2)$ then 
  $\mRecol(\N)$ is trivial. 
 \end{corollary}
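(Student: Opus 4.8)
The plan is to transport the question down to a one-point target by iterating Proposition~\ref{prop:dismant}. Suppose $\N$ dismantles to $\N'$, where $\N'$ is the loop $M^\ell(K_1)$ or the edge $M(K_2)$, and let $\N = \N_0, \N_1, \dots, \N_k = \N'$ be a witnessing sequence of matroids, with $r_i : \N_{i-1} \to \N_i$ a dismantling retraction for each $i$. I would fix an arbitrary instance $(\M, \tau, \sigma)$ of $\mRecol(\N)$; the only feature of the instance I need is that, by definition, $\tau$ and $\sigma$ are genuine vertices of $\mCol(\M, \N)$, so that this colouring graph is nonempty.

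Applying Proposition~\ref{prop:dismant} to $r_1$ gives that $\tau$ and $\sigma$ lie in a common component of $\mCol(\M, \N_0)$ if and only if $r_1 \circ \tau$ and $r_1 \circ \sigma$ do so in $\mCol(\M, \N_1)$; iterating along $r_2, \dots, r_k$ yields that $\tau$ and $\sigma$ are connected in $\mCol(\M, \N)$ if and only if $r \circ \tau$ and $r \circ \sigma$ are connected in $\mCol(\M, \N')$, where $r = r_k \circ \cdots \circ r_1$. Now $\N'$ has a single point --- a loop if $\N' = M^\ell(K_1)$, and the unique nonzero vector of $\Z_2$ if $\N' = M(K_2)$ --- so every homomorphism $\M \to \N'$ is the constant map onto that point, whence $\mCol(\M, \N')$ has at most one vertex. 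Since $r \circ \tau$ is such a vertex, $\mCol(\M, \N')$ has exactly one vertex and $r \circ \tau = r \circ \sigma$; the trivial length-zero path between them then lifts, through the chain of equivalences above, to a path between $\tau$ and $\sigma$. As the instance was arbitrary, $\mRecol(\N)$ is trivial.

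I do not anticipate a genuine obstacle, since the real work has already been done in Proposition~\ref{prop:dismant}; the base cases are also recorded elsewhere, the $M(K_2)$ case in Fact~\ref{FullMainEasy} and the loop case as the $t = 0$ instance of Example~\ref{example:PG}. The one point deserving a sentence of care is why an instance exists at all: if $\M$ had an odd circuit then composing any $\N$-colouring of $\M$ with $r$ would produce an $M(K_2)$- or loop-colouring of $\M$, forcing every circuit of $\M$ to be even; so in that degenerate case $\mCol(\M, \N)$ is empty and there is nothing to check. Otherwise the argument above applies verbatim.
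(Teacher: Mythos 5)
Your proof is correct and follows essentially the same route as the paper: iterate Proposition~\ref{prop:dismant} down the dismantling sequence and then observe that the colouring graph into a one-point target has at most one vertex (the paper cites Fact~\ref{FullMainEasy} for the $M(K_2)$ base case rather than rederiving it, but the content is identical). Your closing remark about the degenerate empty-instance case is harmless but unnecessary, since an instance of $\mRecol(\N)$ already supplies $\tau$ and $\sigma$ as vertices of $\mCol(\M,\N)$.
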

 \begin{proof}
  By \Cref{prop:dismant} it is enough to show 
  $\mRecol(M^\ell(K_1))$ and $\mRecol(M(K_2))$ are trivial, and the latter is done in \Cref{FullMainEasy}.  For the former, observe that for any instance $\M$ the graph $\mCol(\M, M^\ell(K_1))$ is a single vertex, and so all instances of $\mRecol(M^\ell(K_1))$ are YES instances. 
 \end{proof}

 In the graph case, if follows by a result of \cite{BW00} that $\gRecol(H)$ is trivial if and only if $H$ is dismantlable. We wonder if an analogue holds for matroids.

\begin{conjecture}\label{conj:dismant}
   $\mRecol(N)$ is trivial if and only if $N$ dismantles to the loop $\M^\ell(K_1)$ or the edge $\M(K_2)$.
\end{conjecture}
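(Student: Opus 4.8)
The forward implication is exactly \Cref{smallcliques}, so the substance is the converse: if $\N$ does not dismantle to $\M^\ell(K_1)$ or $\M(K_2)$, then $\mRecol(\N)$ is not trivial. The plan is to induct on $|E(\N)|$. If $\N$ admits a dismantling retraction $r\colon\N\to\N'$, then, since $r$ has a coretraction, every homomorphism $\M\to\N'$ is of the form $r\circ\tau$, so \Cref{prop:dismant} gives that $\mCol(\M,\N)$ is connected if and only if $\mCol(\M,\N')$ is, for every $\M$; hence $\mRecol(\N)$ is trivial iff $\mRecol(\N')$ is, and if $\N'$ dismantles to $\M^\ell(K_1)$ or $\M(K_2)$ then so does $\N$. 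By induction it therefore suffices to treat the case that $\N$ is \emph{stiff} --- it admits no dismantling retraction --- and $|E(\N)|\ge 2$: for such $\N$ we must exhibit a binary matroid $\M$ and two homomorphisms $\tau,\sigma\colon\M\to\N$ lying in distinct components of $\mCol(\M,\N)$.

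The natural strategy, mirroring Brightwell--Winkler, is to produce a \emph{frozen} homomorphism, i.e.\ an isolated vertex of $\mCol(\M,\N)$; then $\tau$ and any other $\N$-colouring of $\M$ form a no-instance. For a stiff loopless graph $H$ the identity $\id\colon H\to H$ is already frozen in $\gCol(H,H)$, and this is what powers the graph theorem of \cite{BW00}; but in the matroid setting this fails, and I expect that is where the genuine difficulty lies. For $\N=\M(K_3)$, for instance, $\id$ is \emph{not} frozen in $\mCol(\M(K_3),\M(K_3))$ --- it is adjacent there to the point transpositions --- because recolouring may be carried out on \emph{every} cocircuit of $\M$, not only on the `stars' that play the role of single-vertex recolouring in the graph case. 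So one cannot take $\M=\N$; the instance must be built so that the extra cocircuit moves do not destroy frozenness.

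A concrete route goes through the decision graph $D(\N)$ of Section~\ref{sect:decisiongraphs}. By Tutte duality an $\N$-colouring of $\M(G)$ is a $D(\N)$-colouring of $G$, and recolouring on a cocircuit of $\M(G)$ --- an edge cut $\delta(S)$ --- corresponds to translating the $D(\N)$-colours on all of $S$ by a single element of the point space; so $\mCol(\M(G),\N)$ is $\gCol(G,D(\N))$ enlarged by these `set-shift' moves. The plan would be: (i) show that if $\N$ is stiff and $\N\ne\M^\ell(K_1),\M(K_2)$ then $D(\N)$ is not dismantlable to a single looped vertex; (ii) apply the Brightwell--Winkler dichotomy to obtain a graph $G$ carrying a frozen $D(\N)$-colouring $\phi$; (iii) strengthen $G$ and $\phi$ so that $\phi$ is also fixed by every set-shift move, whence the associated $\tau_\phi\colon\M(G)\to\N$ is frozen in $\mCol(\M(G),\N)$; then finish as in the first paragraph, some second $\N$-colouring of $\M(G)$ existing because $|E(\N)|\ge 2$.

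The hard part will be step (iii): a colouring frozen against single-vertex changes is easily unfrozen by translating the colours of a large shore all at once, so one needs either a way to certify that a frozen colouring survives all translations of all cuts, or --- more plausibly --- a genuinely new construction, quite possibly using \emph{non-graphic} instances $\M$, where the available cocircuits are easier to pin down. This is precisely the kind of argument the paper does carry out for the special targets $\M(K_{2^t})$ (\Cref{HardK2t}) and $\M(K_n)$ with $n\ge 5$ (\Cref{thm:otherKn}), via reductions from Kempe recolouring and related $\PSPACE$-complete problems; what is missing is a construction uniform over all stiff $\N$, together with the structural input (i) linking stiffness of $\N$ to non-dismantlability of $D(\N)$. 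Two further points need care: the induction must bottom out correctly --- every $\N$ that does not dismantle to $\M^\ell(K_1)$ or $\M(K_2)$ is either stiff with at least two points or dismantles to a strictly smaller such matroid --- and one must check that the constructed no-instance does not secretly collapse to a single vertex, the way $\mCol(\M,\M(K_2))$ does once the translation action is quotiented out.
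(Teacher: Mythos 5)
This statement is a \emph{conjecture} in the paper --- the authors explicitly pose it as open and prove only one direction --- so the standard of comparison is whether your argument actually closes it, and it does not. What you do establish is correct: the ``if'' direction is indeed \Cref{smallcliques}, and your reduction to the case of a \emph{stiff} target (no dismantling retraction) via \Cref{prop:dismant} is sound, since the coretraction shows every $\M\to\N'$ has the form $r\circ\tau$ and hence $\mCol(\M,\N)$ is connected for all $\M$ iff $\mCol(\M,\N')$ is. But the ``only if'' direction is precisely the open content of the conjecture, and your own steps (i) and (iii) --- relating stiffness of $\N$ to non-dismantlability of $D_u(\N)$, and upgrading a Brightwell--Winkler frozen $D_u(\N)$-colouring so that it survives the translation-on-a-cut moves that have no analogue in $\gCol(G,D_u(\N))$ --- are exactly where the difficulty lives. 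You correctly diagnose why the graph argument breaks (the identity on a stiff matroid need not be frozen, as the $\M(K_3)$ transpositions show), but diagnosing the obstruction is not the same as overcoming it; as written this is a research plan with the decisive construction missing, not a proof.

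One further point needs attention even in the reduction step. Under the paper's literal definitions, a retraction need not be onto a proper submatroid, so an automorphism adjacent to the identity (e.g.\ a point transposition of $\M(K_3)$) qualifies as a ``dismantling retraction'' that does not decrease $|E(\N)|$. Your induction on $|E(\N)|$ therefore does not obviously terminate, and ``stiff'' as you use it (admits no dismantling retraction at all) is satisfied by fewer matroids than you need: you should either restrict to dismantling retractions with proper image or quotient by the automorphisms adjacent to the identity before inducting. This is a fixable bookkeeping issue, but it sits on top of the genuinely open step, which remains open.
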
   
 
 \subsection{Applications}

 Having proved some basics of the matroid recolouring graph, we get some quick results.

 \begin{example}
  Where $\PG^\ell(t-1,2)$ is represented as the matrix of all columns over $\Z_2$ of length $t$, the projection map onto the first $t-1$ coordinates is linear, so is a retraction of $\PG^\ell(t-1,2)$ onto $\PG^\ell(t-2,2)$. Indeed it is 
  $\id + e_t \chi_C$ where $e_t$ is the $t^{th}$ normal vector, and $C$ is the cutset of elements have a $1$ in the $t^{th}$ coordinate. So it is a dismantling retraction. 
 \end{example}

 The looped projective geometries $\PG^\ell(t-1,2)$ for 
 $t = 0,1,2$ are the looped cliques $\M^\ell(K_n)$ for $n = 1,2,3$ respectively. 
 
 \begin{example}\label{Dismantk4k3}
  Where $\M(K_4)$ is represented as the set of columns of length $4$ over $\Z_2$ that have $2$ entries, the map in \Cref{K4K3} can be defined as the map $\id + [1,1,1,1]^T \chi_C$ where $C$ is the set of points with a $1$ in the fourth coordinate. This is adjacent to the identity, so is a dismantling retraction, showing that $\M(K_4)$ dismantles to $\M(K_3)$.
 \end{example}

 \begin{example}\label{Dismantknl}
   All reflexive cliques dismantle to the loop, so $\M(K_n^\ell)$ dismantles to $\M(K_1^\ell)$ for all $n \geq 2$.  Indeed where 
   $\M(K^\ell_n)$ is represented as the set of columns of length $n$ over $\Z_2$ that have $0$ or $2$ entries, the map $\id + [0, \dots, 0,1,1]^T\chi_C$ where $C$ is the set of points with a $1$ in the $n^{th}$ coordinate is a dimantling retraction of $\M(K_n^\ell)$ to 
   a submatroid isomorphic to $\M(K_{n-1}^\ell)$.
   \end{example}

  This reproves \Cref{FullMainEasy} in a more systematic way, and gives
  us a looped version as well. 
  
 \begin{fact}\label{fact:tract}
  The problem $\mRecol(\M(K_n))$ is trivial if $n \leq 2$, and the problem $\mRecol(\M^\ell(K_n))$ is trivial for all $n$. 
 \end{fact}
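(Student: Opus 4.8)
The plan is to derive \Cref{fact:tract} as an immediate consequence of the dismantling results already established, using the explicit dismantlings constructed in Examples \ref{Dismantk4k3} and \ref{Dismantknl}. First I would handle the looped case: by \Cref{Dismantknl}, the looped clique $\M^\ell(K_n)$ dismantles to $\M^\ell(K_{n-1})$ for every $n \geq 2$, so iterating, $\M^\ell(K_n)$ dismantles to $\M^\ell(K_1)$, the loop. Then \Cref{smallcliques} immediately gives that $\mRecol(\M^\ell(K_n))$ is trivial for all $n \geq 1$ (the case $n=1$ being the loop itself).

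For the loopless case, I would argue by induction on $n$. The base cases $n = 1$ and $n = 2$ are exactly \Cref{FullMainEasy}. For $n \leq 2$ there is nothing more to do; but to get a uniform systematic argument I would note that \Cref{Dismantk4k3} shows $\M(K_4)$ dismantles to $\M(K_3)$, and more to the point, one should observe (as the surrounding text suggests, via the identification of $\PG^\ell(1,2)$ with $\M(K_3)$ and the projection being a dismantling retraction only in the looped setting) that the key content here is just the $n \leq 2$ statement. So the cleanest route is: $\mRecol(\M(K_1))$ and $\mRecol(\M(K_2))$ are trivial by \Cref{FullMainEasy} (equivalently, by \Cref{smallcliques} since $\M(K_2)$ trivially dismantles to itself and $\M(K_1)$ is the loop), which is precisely the first assertion.

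Since the whole statement is a repackaging of prior results, the proof is short; the only mild subtlety — the "main obstacle" such as it is — is bookkeeping about which cliques dismantle to what. I would be careful to note that the loopless clique $\M(K_n)$ for $n \geq 4$ does \emph{not} in general dismantle down to $\M(K_2)$ (indeed \Cref{FullMain} shows $\mRecol(\M(K_n))$ is $\PSPACE$-complete for $n \geq 3$, so no such dismantling can exist), so the loopless half of \Cref{fact:tract} genuinely only covers $n \leq 2$, whereas the looped half covers all $n$ precisely because the extra loop makes the coordinate-projection a dismantling retraction.

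\begin{proof}
 For the looped case, \Cref{Dismantknl} shows that $\M^\ell(K_n)$ dismantles to a submatroid isomorphic to $\M^\ell(K_{n-1})$ for every $n \geq 2$. Composing these dismantling retractions, $\M^\ell(K_n)$ dismantles to $\M^\ell(K_1)$, which is the loop. Hence by \Cref{smallcliques}, $\mRecol(\M^\ell(K_n))$ is trivial for every $n \geq 1$.

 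For the loopless case with $n \leq 2$: the matroid $\M(K_1)$ is the loop $\M^\ell(K_1)$, so $\mRecol(\M(K_1))$ is trivial by the looped case just proved (or directly by \Cref{smallcliques}); and $\M(K_2)$ dismantles trivially to itself, so $\mRecol(\M(K_2))$ is trivial by \Cref{smallcliques}. This is exactly the content of \Cref{FullMainEasy}.
\end{proof}
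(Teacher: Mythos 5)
Your proof is correct and matches the paper's intended argument: the paper states \Cref{fact:tract} as an immediate consequence of \Cref{Dismantknl} together with \Cref{smallcliques} for the looped half, and of \Cref{FullMainEasy} (equivalently \Cref{smallcliques}) for the loopless $n\leq 2$ half. Your bookkeeping remarks about which cliques dismantle to what are accurate and consistent with \Cref{FullMain}.
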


 Now we turn our attention to the hardness part of \Cref{FullMain}, we start with the main tools we need for this.

 \section{Decision Graphs}\label{sect:decisiongraphs}



 A {\em decision} graph for a  matroid $N$ is a graph $D$ such that for any graph $G$,
  \[ G \to D \iff \M(G) \to N. \]
 These were constructed for all non-bipartite binary matroids in \cite{HKS} and used to show that $\mHom(N)$ is hard for all $N$ with girth $3$.  
 
 Our construction did not use the ideas of Tutte's duality, which we recalled in the introduction, but a much simpler construction using these ideas was (essentially) given in Proposition 3.4 of \cite{DNR06}. To translate the construction to matroids, one requires a representation, and different representations can give different (though homomorphically equivalent) graphs.   We give this construction without a representation, for all binary matroids, and then observe that it is exactly the construction from \cite{DNR06} in the case that the representing matrix is an $m \times n$ matrix where $m$ is the rank of $N$.

 \begin{definition}
  Let $N$ be a binary matroid of rank $m$ and size $n$, and let $B$ be a basis of $N$. The {\em decision graph} $D(N,B)$ is a graph on the power set of $B$. Again viewing the power-set as a $\Z_2$-module, two sets $V,W \subset B$ are adjacent if $|V \triangle W| =1$ or there exists $e \in E-B$ such that $(V \triangle W) \cup \{e\}$ is a circuit. 
 \end{definition}

 This is implicit to the matroid, but using a representation, has a form that is generally easier to use. Letting $N$ be represented by a matrix $A$ where the elements $B$ of the basis are the columns with a single non-zero entry, the graph defined in the following definition is clearly $D(N,B)$.   
 
 \begin{definition}Let $A$ be an matrix representing $N$ over $\Z_2$.The decision graph $D(N,A)$ is the graph on $\Z_2^m$ such that columns $v$ and $w$ of $\Z_2^m$ are adjacent if $v + w$ is a column of $A$. 
 \end{definition}

 If $N$ is a matrix of rank $r$ but is represented by a matrix with more than $r$ rows, then $D(N,A)$ has more vertices than $D(N,B)$.
 But if $A$ has $r$ rows, and the columns corresponding to the elements of $B$ make an identity matrix, then these are the same graph, independent of representation. 

 Indeed, it follows by the unique representability of binary matroids, proved in \cite{BL76}, that any two representations
 of a rank $r$ binary matroid $\N$ by matrices over $\Z_2$ with $r$ rows differ by elementary row operations, so where one is $A$, the other is $A'A$ for some invertible $r \times r$ matrix $A'$.
 The vertex map $\phi: D(N,A) \to D(N,A'A)$ defined by $\phi(v) = A'v$ is a graph isomorphism, as the invertibility of $A'$ implies that $v+w = e$ if and only if $A'v + A'w = A'e$.

 \begin{fact}
  For a binary matroid $N$ of rank $r$ and size $n$,5 the decision graph $D(N,B)$, for any basis $B$, is isomorphic to $D(N,A)$ for any representation of $N$ by an $n$-by-$r$ matrix $A$
  over $\Z_2$. 
 \end{fact}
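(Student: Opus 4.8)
The proof packages the observations recorded in the paragraphs preceding the statement, and the plan is to organise them into three steps: (i) realise $D(N,B)$ as $D(N,A_B)$ for a suitably normalised representation $A_B$; (ii) show $D(N,A)\cong D(N,A')$ for any two representations of $N$ by matrices over $\Z_2$ with $r$ rows; and (iii) combine.

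For (i), I would fix a basis $B$ of $N$ and produce a distinguished representation $A_B$ by a matrix over $\Z_2$ with $r$ rows whose columns indexed by $B$ form the $r\times r$ identity. Such a representation exists: starting from any representation of $N$ with $r$ rows, the $r\times r$ submatrix on the columns of $B$ is invertible because $B$ is a basis, so left-multiplying the whole matrix by its inverse gives $A_B$. I would then check, directly from the two definitions, that $D(N,A_B)=D(N,B)$ once a subset $V\subseteq B$ is identified with its indicator vector $v\in\Z_2^r$: a symmetric difference of size $1$ corresponds to $v+w$ being a standard basis vector, i.e.\ a column of $A_B$ coming from $B$; and the existence of $e\in E(N)-B$ with $(V\triangle W)\cup\{e\}$ a circuit corresponds, using that circuits of binary matroids sum to zero, to $v+w$ equalling the column $A_Be$, which comes from $E(N)-B$. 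Hence $v\sim w$ in $D(N,A_B)$, meaning $v+w$ is a column of $A_B$, exactly when $V\sim W$ in $D(N,B)$.

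For (ii), I would compare two representations $A,A'$ of $N$ by matrices over $\Z_2$ with $r$ rows, their columns indexed by the fixed ground set $E(N)$ in a fixed order. Here I would invoke the unique representability of binary matroids from \cite{BL76}: since the only nonzero scalar of $\Z_2$ is $1$ there is no column scaling to account for, and with the column order held fixed this forces $A'=PA$ for some invertible $r\times r$ matrix $P$ over $\Z_2$. The linear bijection $v\mapsto Pv$ of $\Z_2^r$ is then a graph isomorphism $D(N,A)\to D(N,A')$, because $v+w$ is a column of $A$ precisely when $P(v+w)$ is the corresponding column of $A'=PA$.

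For (iii), I would simply note that for any basis $B$ and any representation $A$ of $N$ by a matrix with $r$ rows one has $D(N,B)=D(N,A_B)\cong D(N,A)$, via $v\mapsto Pv$ where $A=PA_B$; since $B$ and $A$ are arbitrary, all of these graphs are mutually isomorphic. I do not expect a genuine obstacle beyond the bookkeeping: the only place a nontrivial input is used is the appeal to \cite{BL76}, and the point that warrants care is precisely the reduction of unique representability over $\Z_2$ to ``two $r$-row representations differ by left multiplication by an invertible matrix'', which relies on the ground set, hence the column order, being held fixed.
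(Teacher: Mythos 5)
Your proposal is correct and follows essentially the same route as the paper: the paper also identifies $D(N,B)$ with $D(N,A)$ for a representation whose $B$-columns form the identity, and then invokes unique representability (\cite{BL76}) to write any other $r$-row representation as $A'A$ for an invertible $A'$, with $v\mapsto A'v$ giving the graph isomorphism. Your write-up just makes the $D(N,B)=D(N,A_B)$ identification and the role of the fixed column ordering slightly more explicit than the paper does.
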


 It is shown in Proposition 3.4 of \cite{DNR06} that the graphs $D(N,A)$ for representations $A$ of $D$ are decision graphs.  As $D(N,B)$ is the smallest of these, we now denote it by $D_u(N)$ and call it {\bf the} decision graph of $N$. We mentioned before that $K_4$ is the decision graph for $M(K_3)$. The details of this are given in the following example, as $M(K_3)$ is the projective geometry $\PG(1,2)$. 

 \begin{example}\label{ex:DGofClique}
 
  The projective geometry matroid $PG(t-1,2)$ has representation as the matrix of all non-zero columns of $\Z_2^t$. Its universal decision graph is $K_{2^t}$. 
  
  The matroid clique $M(K_n)$ has representation as the matrix of columns of $\Z_2^{n-1}$ with weight $1$ or $2$. Its universal decision graph is the $n$-dimensional half-cube $\hcube_n$-- the vertices are the columns of $\Z_2^{n-1}$ and two are adjacent if they differ in one or two coordinates.

 \end{example}

 \subsection{The decision graph and the induced Tutte connection}  

 The proof from Proposition 3.4 of \cite{DNR06} that $D_u(N)$ is a decision graph uses the Tutte construction that we saw in \Cref{ex:Jaeger} to get  an element of $\gCol(G, D_u(N)$ for every element of $\mCol(\M(G),N)$, and vice versa. 

\begin{definition}[The Tutte Connection]\label{def:Jaeger}
 For a graph $G$, fix a vertex $v_0$ and a spanning subtree $T$ of $G$ rooted at $v_0$.  This defines a partial ordering of $V(G)$ by saying that $u \leq v$ if $u$ is in the path in $T$ from $v$ to $v_0$. Let $v_0,v_1, \dots, v_{n-1}$ be an
ordering of $V(G)$ that refines this partial order.  Let $b$ be an vertex  of $D_u(N)$. 
\begin{itemize}
\item 
For any $\tau \in \mCol(\M(G),\N)$ 
  let $\phi_{\tau,b}: V(G) \to V(D_u(N))$ be defined by setting 
 \begin{equation}\label{Jaeger}
     \phi_{\tau,b}(v_0) = b, \qquad \mbox{and} \qquad  \phi_{\tau,b}(v_j) = \phi_{\tau,b}(v_{i}) + \tau(v_iv_j)
\end{equation}
 for $j = 1, \dots, n-1$, where $v_i$ is the neighbour of $v_j$ in $T$ of lower index. 
\item  For a graph homomorphism $\phi: G \to D_u(N)$, define the map $\tau_\phi: M(G) \to N$ by 
 \begin{equation}\label{reverseJaeger}
  \tau_\phi(uv) = \phi(u) + \phi(v).
 \end{equation}
 \end{itemize}
 \end{definition}
 
 The standard proof of Tutte shows that $\phi_{\tau,b}: G \to D_u(N)$ and $\tau_\phi: \M(G) \to N$ are homomorphisms (see  Proposition 3.4 of \cite{DNR06})
 and that $\tau_{\phi_{\tau,b}} = \tau$.  Thus on the level of vertices, the decision graph induces maps between $\mCol(\M(G),N)$  and $\gCol(G,D_u(\N))$.
 Letting $\Phi_b(\tau) = \phi_{\tau,e}$ and $\Phi^{-1}(\phi) = \tau_\phi$, we have that for each $b \in V(D_u(\N))$, 
         \[  \Phi_b: V(\mCol(M(G),N)) \to V(\gCol(G,D_u(N))) \]
 is an injection with inverse $\Phi^{-1}$. 
 Let $\gCol(G,D_u(N);v_0,b)$ be the subgraph of $\gCol(G,D_u(N))$ induced on the   the vertices $\phi$ for which $\phi(v_0) = b$.

 \begin{fact}
 For a matroid $N$, and a graph $G$ with designated vertex $v_0$ of $G$, the map
 \[ \Phi_{b}:   V(\mCol(M(G),N)) \to  V(\gCol(G,D_u(N);v_0,b)) \]
  is a bijection with inverse $\Phi^{-1}$.   
 \end{fact}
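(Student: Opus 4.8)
The plan is to deduce the statement from the properties of the Tutte Connection (\Cref{def:Jaeger}) recalled just above, the only genuinely new ingredient being surjectivity of $\Phi_b$ onto the prescribed subgraph. Recall that Tutte's construction already gives: $\phi_{\tau,b}$ is a graph homomorphism $G \to D_u(N)$ for every $\tau \in \mCol(M(G),N)$; $\tau_\phi$ is a matroid homomorphism $M(G) \to N$ for every graph homomorphism $\phi : G \to D_u(N)$; and $\tau_{\phi_{\tau,b}} = \tau$. Since $\phi_{\tau,b}(v_0) = b$ by the initial condition in \eqref{Jaeger}, the map $\Phi_b$ lands in $V(\gCol(G,D_u(N);v_0,b))$, so $\Phi_b$ is well defined; $\Phi^{-1}$ is well defined by the second fact; and the third fact says $\Phi^{-1} \circ \Phi_b = \id$, so $\Phi_b$ is injective. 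It therefore remains only to show $\Phi_b$ is onto $V(\gCol(G,D_u(N);v_0,b))$, equivalently that $\phi_{\tau_\phi,b} = \phi$ for every graph homomorphism $\phi : G \to D_u(N)$ with $\phi(v_0) = b$.

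To prove this, I would fix such a $\phi$, put $\tau := \tau_\phi$, and argue $\phi_{\tau,b} = \phi$ by induction along the ordering $v_0, v_1, \dots, v_{n-1}$ of $V(G)$ refining the $T$-rooted partial order from \Cref{def:Jaeger}. The base case is $\phi_{\tau,b}(v_0) = b = \phi(v_0)$. For the inductive step, fix $j \ge 1$ and let $v_i$ be the neighbour of $v_j$ in $T$ of lower index, so $i < j$ and $v_iv_j \in E(T) \subseteq E(G)$. Using the recursion \eqref{Jaeger}, the inductive hypothesis $\phi_{\tau,b}(v_i) = \phi(v_i)$, the definition \eqref{reverseJaeger} of $\tau_\phi$ on the edge $v_iv_j$, and the fact that $x + x = 0$ in the $\Z_2$-module in which the vertices of $D_u(N)$ live, we get
\[
\phi_{\tau,b}(v_j) = \phi_{\tau,b}(v_i) + \tau(v_iv_j) = \phi(v_i) + \bigl(\phi(v_i) + \phi(v_j)\bigr) = \phi(v_j).
\]
This completes the induction, so $\Phi_b(\Phi^{-1}(\phi)) = \phi_{\tau_\phi,b} = \phi$, and $\Phi_b$ is a bijection with inverse $\Phi^{-1}$.

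The computation is entirely routine; the only points requiring care are bookkeeping ones. One must make sure that the addition in \eqref{Jaeger} defining $\phi_{\tau,b}$ and the addition in \eqref{reverseJaeger} defining $\tau_\phi$ are the same group operation — addition in the power set of the basis $B$ (equivalently in $\Z_2^{|B|}$), which is exactly the vertex set of $D_u(N)$ — so that the cancellation $\phi(v_i) + \phi(v_i) = 0$ is legitimate; and one should note that the edges $v_iv_j$ invoked in the recursion are genuinely edges of $G$ because they lie in the spanning tree $T$. It is also worth remarking (though not needed for the statement as phrased, which fixes one tree and one refining order) that $\phi_{\tau,b}$ does not depend on the choice of $T$ or of the refining order, by the same induction. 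I do not anticipate any real obstacle: modulo the cited Tutte lemma, the whole content of the Fact is the one-line induction above.
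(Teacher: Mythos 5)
Your proposal is correct and follows essentially the same route as the paper, which states this Fact without proof and relies on the preceding citation of Tutte's standard argument (that $\phi_{\tau,b}$ and $\tau_\phi$ are homomorphisms and that $\tau_{\phi_{\tau,b}}=\tau$, giving injectivity). The only content you add is the explicit induction showing $\phi_{\tau_\phi,b}=\phi$ for $\phi(v_0)=b$, which is exactly the surjectivity detail the paper leaves implicit, and your computation using the $\Z_2$ cancellation is correct.
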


 As $b$ runs over the vertices of $D_u(\N)$, the graphs $\gCol(G,D_u(N);v_0,b)$ partition $\gCol(G,D_u(N))$.  Moreover, as $D_u(\N)$ is vertex transitive, they induce isomorphic subgraphs.  Because of this we generally drop the subscript $b$, assuming it is the $0$ vector in $V(D_u(N))$ and let $\Phi = \Phi_0$.  Similarily, the choice of $T$ in \Cref{def:Jaeger} is unimportant, and changing $v_0$ only changes $b$ in $\phi_{\tau,b}$ to another constant, so we generally remove $v_0$ from the notation and write
$\gCol(G,D_u(\N);0)$ for $\gCol(G,D_u(N);v_0,b)$.

 Now while $\Phi$ and $\Phi^{-1}$ give a nice $1$-to-$|D_u(N)|$ correspondence between the vertex sets of $\mCol(M(G),N)$ and $\gCol(G,D_u(\N))$, neither of them induce a homomorphism.  Indeed, the following example shows that $\Phi$ preserves no connectivity at all. 

 \begin{example}
 The decision graph of $\M(K_3)$ is $D_u(\M(K_3)) = K_4$.
 While the recolouring graph $\gRecol(K_4,K_4)$ is easily seen to consist of $24$ independent vertices, $\gRecol(\M(K_4),\M(K_3))$ is a $K_{3,3}$-- from a given colouring, we can recolour by switching the colours on any two colour classes. 
 \end{example}

  On the other hand, one can show that $\Phi^{-1}$ maps components of $\gCol(G,D_u(N))$ to components of $\mCol(M(G),N)$.  
  We make this into a bijection of components by replacing $\gCol(G,D_u(N))$ by a super-graph that we get by considering Kempe recolourings.

 \subsection{Matroid recolouring to Kempe recolouring}

 Inspired by Kempe's flawed proof of the four-colour theorem, the notion of a (vertex) Kempe recolouring of a graph has appeared 
 in many contexts, and indeed the notion of  edge Kempe recolourings, which one gets via the Tutte connection, have also appeared. 
 In \cite{KempeRecon20} the reconfiguration problem was considered for Kempe recolourings.  
 Given a map $\phi \in \gCol(G,K_n)$ a component of the subgraph induced on the set $\phi^{-1}(a,b)$ of vertices of two colours $a$ and $b$ is 
 an $(a,b)$-component of $G$. A Kempe recolouring of $G$ is  when one toggles the colours $a$ and $b$ on an $(a,b)$-component.

  The following was proved in \cite{KempeRecon20}.
  
  \begin{theorem}\label{kemperecol}
   The problem $\kRecol(K_n)$ of deciding, for two $n$-colourings of a graph $G$, if there is a path between them in $\kCol(G,K_n)$, is $\PSPACE$-complete for $n \geq 4$.  
  \end{theorem}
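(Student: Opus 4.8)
This statement is about ordinary graph recolouring rather than matroids, and it is quoted here only for later use; the plan for a self-contained proof would have two parts. For \emph{membership in $\PSPACE$}, I would note that $\kCol(G,K_n)$ has at most $n^{|V(G)|}$ vertices, each an $n$-colouring storable in $O(|V(G)|\log n)$ space, and that given colourings $\phi,\psi$ one can test in polynomial time whether $\psi$ arises from $\phi$ by a single Kempe change: for each of the $\binom n2$ colour pairs $\{a,b\}$, compute the $(a,b)$-components of $\phi$ and check whether toggling exactly one of them gives $\psi$. A reconfiguration sequence can then be guessed one Kempe change at a time by a machine keeping on its tape only the current colouring and a step counter bounded by $n^{|V(G)|}$; this uses polynomial space, and nondeterministic polynomial space equals $\PSPACE$ by Savitch's theorem.

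For \emph{$\PSPACE$-hardness when $n\ge 4$}, the plan is to reduce from a known $\PSPACE$-complete reconfiguration problem --- the plainest candidate being the colouring reconfiguration problem $\gRecol(K_n)$, shown $\PSPACE$-complete for $n\ge 4$ in \cite{BonCer09}, or, more robustly, Hearn and Demaine's nondeterministic constraint logic. The difficulty, compared with the ordinary-recolouring hardness proof, is that a single Kempe change can flip an entire bichromatic component and so is far less local than a single-vertex recolouring. I would therefore build, from an instance of the source problem, a graph $G'$ together with colourings $\phi',\psi'$ using gadgets designed so that every bichromatic component occurring in a colouring reachable from $\phi'$ is either a single vertex --- in which case the Kempe change degenerates to an ordinary recolouring --- or a fixed gadget-internal set whose flip simulates one atomic move of the source problem. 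The usual ingredients would be rigid clique-like padding whose colours cannot propagate, any attempt to extend a bichromatic chain into the padding being blocked or yielding a forbidden configuration, and wiring gadgets that carry the state of each variable while pinning enough surrounding colours that chains stay confined.

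I would then verify the two directions of the reduction: every reconfiguration sequence of the source instance lifts to one for $(G',\phi',\psi')$ by replacing each atomic move with the corresponding short block of gadget-local Kempe changes; conversely, every Kempe reconfiguration sequence for $(G',\phi',\psi')$ should be rewritable so as to use only gadget-local and single-vertex changes, and then projected back to a sequence for the source. The hard part --- essentially the whole content of the proof --- is the structural invariant needed for this converse: one must show that no sequence of Kempe changes ever produces a colouring from which a long, unintended bichromatic chain becomes flippable and short-circuits the simulation. This is what forces the gadgets to be intricate and, presumably, what makes the boundary case $n=4$ the most delicate. In the present paper we simply invoke the statement as proved in \cite{KempeRecon20}.
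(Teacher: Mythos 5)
The paper offers no proof of this theorem at all --- it simply states it as a result of \cite{KempeRecon20} --- and your proposal ultimately does the same, so you match the paper's approach. Your sketched outline of a self-contained argument (membership via Savitch's theorem, hardness via confinement gadgets forcing Kempe chains to be local) is a plausible summary of what such a proof requires, but as you yourself note it omits the actual gadget construction and invariant, so the operative content of your answer is the citation, which is exactly what the paper relies on.
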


 Replacing $K_n$ with any decision graph, we can give the following reframing of this definition, which allows us to `add' colours.
 It is clear that this coincides with the above notion of `toggling' when  $D$ is a clique $K_{2^t} = D_u(PG(t-1,2))$.   

 \begin{definition}
  Let $N$ be a matroid and $D = D_u(N)$ be its decision graph. 
  For a graph $G$, maps $\phi$ and $\psi$ in $\gCol(G,D)$ are {\em Kempe-adjacent}, or, adjacent in $\kCol(G,D)$ if 
   \[ \psi = \phi + b\bone{U} \]
  for some $b,b'$ in $[n]$ and some component $U$ of $\phi^{-1}(b',b'+b)$ in $G$. 
  \end{definition}

  For an edge $\phi \sim \phi'$ of $\gCol(G,D_u(N))$, $\phi'$ and $\phi$ differ on only one vertex $v$, and where $b = \phi'(v) - \phi(v)$ the vertex $v$ is a component of $\phi^{-1}(\phi'(v), \phi'(v) + b)$ as no neighbour of $v$ can have either of these colours. So $\gCol(G,D_u(N))$ is a subgraph of $\kCol(G,D_u(N))$. 
  The following says that $\Phi^{-1}$  maps components of $\kCol(G,D_u(N))$ to  components of $\mCol(M(G),N)$. Clearly it does the 
  same for the components of the subgraph $\gCol(G,D_u(N))$. 

 \begin{lemma}\label{easyMK}
   For any matroid $N$ and graph $G$, if $\phi \sim \phi'$ in $\kCol(G,D_u(N))$, then there is a path from $\tau_\phi = \Phi^{-1}(\phi)$ to $\tau_{\phi'} = \Phi^{-1}(\phi')$ in $\mCol(M(G),N)$.
 \end{lemma}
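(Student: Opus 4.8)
The plan is to reduce to the case of a single Kempe step, $\phi \sim \phi'$ in $\kCol(G,D_u(N))$, and to translate that step through the Tutte connection $\Phi^{-1}$ into a short path in $\mCol(M(G),N)$. By the definition of Kempe-adjacency we may write $\phi' = \phi + b\bone{U}$, where $U$ is a component of the subgraph of $G$ induced on the vertex set $\phi^{-1}(b',b'+b)$ for some colours $b',b'+b$ of $D_u(N)$, and $b \in P(N)$ is the (nonzero) difference of these two colours. The first step is to understand, using \Cref{reverseJaeger}, how $\tau_{\phi'}(uv) = \phi'(u)+\phi'(v)$ differs from $\tau_\phi(uv) = \phi(u)+\phi(v)$. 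For an edge $uv$ of $G$ with both endpoints inside $U$, the added constant $b$ appears twice and cancels, so $\tau_{\phi'}(uv) = \tau_\phi(uv)$; likewise if neither endpoint is in $U$. Only for edges $uv$ with exactly one endpoint in $U$ do we get $\tau_{\phi'}(uv) = \tau_\phi(uv) + b$. Thus $\tau_{\phi'} = \tau_\phi + b\bone{C}$, where $C = \partial_G(U)$ is the edge boundary of $U$ in $G$.

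The second step is to argue that $C = \partial_G(U)$ is a cocycle of $M(G)$, i.e.\ an edge cut, hence a disjoint union of cocircuits of $M(G)$. This is standard: the edge boundary of any vertex subset of a graph is an edge cut, and edge cuts are exactly the cocycles of the graphic matroid. Since $U$ is a connected component of an induced subgraph this is no obstacle; the edge boundary of any vertex set works. The third step is to observe that, because $\tau_{\phi'}$ is a genuine matroid homomorphism (it equals $\Phi^{-1}(\phi')$, which is a homomorphism by the properties of the Tutte connection recalled just before \Cref{def:Jaeger}), adding $b$ on each cocircuit of $C$ one at a time stays within $E(N)$ at every point of that cocircuit. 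More carefully: write $C = C_1 \sqcup \dots \sqcup C_k$ as disjoint cocircuits; by \Cref{fact:buildhom} it suffices to check that $\tau_\phi(e) + b \in E(N)$ for every $e \in C_i$, and then $\tau_\phi + b\bone{C_1}$ is a homomorphism adjacent to $\tau_\phi$, and we continue. The cleanest way to see the "$\in E(N)$" condition holds is to note that the \emph{final} map $\tau_\phi + b\bone{C} = \tau_{\phi'}$ is a homomorphism, so $\tau_\phi(e) + b = \tau_{\phi'}(e) \in E(N)$ for every $e \in C$; adding $b$ on a sub-collection of the $C_i$ changes $\tau_\phi$ only on those edges, to values that are either $\tau_\phi(e)$ or $\tau_{\phi'}(e)$, both in $E(N)$. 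Hence every intermediate map is a well-defined homomorphism, and consecutive ones are adjacent in $\mCol(M(G),N)$ by \Cref{fact:buildhom}. This yields a path from $\tau_\phi$ to $\tau_{\phi'}$ of length $k = $ (number of cocircuits in the boundary cut).

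The main obstacle — really the only subtle point — is the second step: confirming that the set of edges with exactly one endpoint in $U$ is a cocycle of $M(G)$ and hence a disjoint union of cocircuits, so that the single Kempe toggle decomposes into a bona fide path (not just a single edge) in $\mCol(M(G),N)$. Once the "one endpoint in $U$" computation for $\tau_{\phi'} - \tau_\phi$ is done and the boundary-cut-is-cocycle fact is invoked, the rest is a routine application of \Cref{fact:buildhom}. Finally, a general walk in $\kCol(G,D_u(N))$ is a concatenation of single Kempe steps, and concatenating the corresponding paths in $\mCol(M(G),N)$ (using $\tau_{\phi_i}$ as the common endpoint of consecutive pieces, which is consistent since $\Phi^{-1}$ is a well-defined map on vertices) gives the desired path from $\tau_\phi$ to $\tau_{\phi'}$, completing the proof.
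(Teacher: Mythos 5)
Your proposal is correct and follows essentially the same route as the paper: write the Kempe step as $\phi' = \phi + b\bone{U}$, observe that $\tau_{\phi'} = \tau_\phi + b\bone{\delta(U)}$ where the edge boundary $\delta(U)$ is a cut and hence a disjoint union of cocircuits, and then add $b$ on these cocircuits one at a time via \Cref{fact:buildhom}. Your explicit check that each intermediate map lands in $E(N)$ (because the final map $\tau_{\phi'}$ is already a homomorphism) is a detail the paper leaves implicit, but it is the same argument.
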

 \begin{proof}

  Assume that $\phi \sim \psi \in \kCol(G,K_{n})$.
 So, by definition,  $\psi = \phi + b\bone{U}$ where $U$ is a $(b,b+b')$-component of $G$ for some $b'$. Let $E = \delta(U)$ be the edges of $G$ from $U$ to $\bar{U}$. This is a cutset of $G$ so is a disjoint union of cocycles $C_1, \dots, C_d$. We have that 
  $\tau_{\phi'} = \tau_\phi + b\bone{C}$, and so we get a path from $\tau_\phi$ to $\tau_{\phi'}$ in $\mCol(M(G),\N)$ by adding $b$ to the cocycles $C_i$, one at a time.

 \end{proof}

  In general $\Phi = \Phi_0$ does not even take edges of $\mCol(\M(G),N)$ to paths of $\kCol(G,D_u(N))$.  But it does when
   we restrict our attention to the case that $D$ is a clique $K_{2^t} = D_u(PG(t-1,2))$.

  \begin{theorem}\label{thm:MK}
  Let $\N = PG(t-1,2)$ for $t \geq 3$ so $D_u(\N) = K_{n}$ with $n = 2^t \geq 4$. There is path from $\tau_\phi$ to $\tau_\psi$ in $\mCol(M(G),N)$ if and only if there is a path from $\phi = \Phi_0(\tau_\phi)$ to $\psi=\Phi_0(\tau_\psi)$ in $\kCol(G,K_{n})$. 
  \end{theorem}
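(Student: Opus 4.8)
The plan is to prove the two directions separately. The backward direction is already essentially done: by \Cref{easyMK}, if there is a path from $\phi$ to $\psi$ in $\kCol(G,K_n)$, then applying $\Phi^{-1}$ edge-by-edge yields a path from $\tau_\phi$ to $\tau_\psi$ in $\mCol(M(G),N)$. So the content is entirely in the forward direction: given a path in $\mCol(M(G),N)$, produce a path in $\kCol(G,K_n)$. Since $\Phi^{-1}$ is a bijection from $V(\gCol(G,D_u(N);0))$ onto $V(\mCol(M(G),N))$, and $\Phi_0$ is its inverse, it suffices to show that each \emph{edge} of $\mCol(M(G),N)$ is sent by $\Phi_0$ to a \emph{walk} of $\kCol(G,K_n)$; composing over the edges of the given path then finishes. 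So the crux is a one-edge statement: if $\tau' = \tau + c\,\bone{C}$ for a cocircuit $C$ of $M(G)$ and a nonzero $c \in P(N) = \Z_2^t$, then $\phi = \Phi_0(\tau)$ and $\phi' = \Phi_0(\tau')$ are joined by a path in $\kCol(G,K_n)$.

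To carry this out, I would first translate the cocircuit $C$ of $M(G)$ back to the graph side: a cocircuit of $M(G)$ is a minimal edge cut $\delta(S)$ of $G$ for some vertex subset $S$ (minimal meaning $G[S]$ and $G[\bar S]$ are connected). Recall from \eqref{Jaeger} that $\phi(v)$ is a running sum of $\tau$-values along the tree path from $v_0$ to $v$; changing $\tau$ to $\tau + c\bone{C}$ therefore changes $\phi(v)$ by $c$ exactly when the $v_0$–$v$ tree path crosses $\delta(S)$ an odd number of times, i.e. (assuming $v_0 \in \bar S$) exactly when $v \in S$. Hence $\phi' = \phi + c\,\bone{S}$, where now $S$ is viewed as a vertex set. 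So the task reduces to: \emph{adding a constant $c\in\Z_2^t$ to $\phi$ on a connected vertex set $S$ whose complement induces a connected graph can be realized by a sequence of Kempe moves.} Since $\phi(S)$ and $\phi(\bar S)$ may overlap as color classes, a single Kempe toggle will not do it directly; this is the main obstacle and where the hypothesis $t\ge 3$ (so $n = 2^t \ge 8$, in particular $n \ge 4$ and $n$ even with room to spare) will be used.

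The way I would overcome this is by an intermediate-vertex / ``clearing'' argument, standard in Kempe-recoloring proofs: because $K_n$ is complete and $n$ is large, one can first Kempe-recolor $\phi$ so that no vertex of $S$ and no vertex of $\bar S$ share a color — concretely, use the edge cut $\delta(S)$ together with the fact that in $\kCol$ one may recolor a whole monochromatic component, to push the colors on (a neighborhood of) $S$ onto a palette disjoint from the colors on $\bar S$, one color at a time, each such step being a Kempe move on an $(a,b)$-component contained in $S$. Once $\phi$ restricted to $S$ and $\phi$ restricted to $\bar S$ use disjoint color sets, the set $S$ itself decomposes into monochromatic components each of which is an $(a,a{+}c)$-component (no neighbor outside has color $a$ or $a{+}c$ after clearing, and inside the colors are controlled), so adding $c$ on $S$ is a sequence of legitimate Kempe toggles; finally reverse the clearing moves. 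Care is needed to keep every intermediate map a genuine homomorphism to $K_n$ (i.e. proper coloring), which is where $n\ge 4$ — and the extra slack from $n = 2^t$ — is essential, and to confirm each toggled set really is a full monochromatic component and not a proper subset of one. Assembling these one-edge walks along the given path in $\mCol(M(G),N)$ yields the desired path in $\kCol(G,K_n)$, completing the proof.
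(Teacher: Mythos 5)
Your reduction of the forward direction to a single edge, the translation of the cocircuit $C$ to an edge cut $\delta(S)$, and the computation that $\phi' = \phi + c\,\bone{S}$ as vertex maps all match the paper's proof. The gap is in the crux: how to realize the addition of $c$ on $S$ by Kempe moves. Your proposed ``clearing'' step --- recolouring so that the colours used on $S$ are disjoint from those used on $\bar{S}$ --- is not achievable in general: if both $G[S]$ and $G[\bar{S}]$ use more than half of the $n$ colours, no disjoint palette exists, and no amount of slack from $n = 2^t$ changes that. You flag this step as needing care but do not supply an argument, and as stated it would fail.

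More importantly, the clearing is unnecessary, and in reaching for it you miss the one observation that makes the theorem true. Full palette disjointness is not needed; what is needed is only that no $u \in S$ has a neighbour $v \in \bar{S}$ with $\phi(v) = \phi(u) + c$. This holds automatically: for such an edge $uv \in C$ one would have $\tau(uv) = \phi(u) + \phi(v) = c$, hence $\tau'(uv) = \tau(uv) + c = 0$, which is not a point of the loopless matroid $N$ --- contradicting that $\tau'$ is a homomorphism. From this it follows that every $(b', b'+c)$-component of $G$ meeting $S$ lies entirely inside $S$ (each step within such a component moves to a neighbour whose colour differs by exactly $c$, so by the observation it cannot cross $\delta(S)$), and since the pairs $\{b', b'+c\}$ partition $\Z_2^t$, these components for varying $b'$ partition $S$. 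Adding $c\,\bone{U_i}$ on each component $U_i$ in turn is then a legitimate sequence of Kempe moves from $\phi$ to $\psi$. Note that this argument uses nothing about $n$ being large; your expectation that $t \ge 3$ is what powers the recolouring step is a further indication that the intended mechanism was missed. Where the special structure of $PG(t-1,2)$ genuinely enters is exactly in the pairing $b' \leftrightarrow b'+c$ being a perfect matching on the colour set, which is why \Cref{thm:MK} is stated for projective geometries rather than arbitrary $N$.
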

  \begin{proof} \Cref{easyMK} gives us the easy direction.  For the other direction,  let $\tau_\phi$ and $\tau_\psi$ be adjacent. 
   We show that there is, in fact, a path from  $\phi$ to $\psi$ in $\kCol(G,K_{n};0)$. 
 We have $\tau_\psi = \tau_\phi + b\bone{C}$ for some cocycle $C$ of $G$. Let $U \subset V(G)$ be such that $\delta(U) = C$, and $v_0 \in \bar{U}$. 
  We claim that for any vertex $u \in U$, all neighbours of colour $\phi(u) + b$ are also in $U$, and so $U$ can be partitioned into 
  $(b',b+b')$-components $U_i$ of $G$ for various $b'$. (Such components cannot intersect, as addition is over $\Z_2$, so pairs any $x$ uniquely with $x + b$.)
  Indeed, let $u$ be in $U$, $v$ be a neighbour in $\bar{U}$, and assume towards contradiction that $\phi(v) = \phi(u) + b$. Then $\tau(uv) = b$ and so, as $uv$ is in $C$, we have $\sigma(uv) = 0$, which is not allowed.
  Starting at $\phi$ and adding $b\bone{U_i}$ for each $i$ we get a path in $\kCol(G,K_n;0)$ to $\psi$. 
  \end{proof}

Thus we can solve $\kCol(G,K_{2^t})$ with $\mCol(M(G),P(t-1,2))$. The reduction is clearly polynomial. Thus using \Cref{kemperecol}, we get the following. 

  \begin{corollary}\label{cor:PGhard}
   The problem $\mRecol(P(t-1,2))$ is $\PSPACE$-complete for $t \geq 3$. Indeed, it is $\PSPACE$-complete for graphic instances. 
  \end{corollary}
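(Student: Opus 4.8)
Every $\mRecol(\N)$ lies in $\PSPACE$, as already observed, so only hardness is at issue, and the plan is to exhibit a polynomial reduction $\kRecol(K_{2^t}) \leq_{\Poly} \mRecol(PG(t-1,2))$ whose output consists of graphic instances; since $t \geq 3$ gives $2^t \geq 8 > 4$, \Cref{kemperecol} guarantees that the source problem is $\PSPACE$-complete. Essentially all of the work is \Cref{thm:MK}, which sets up, via the Tutte connection, a correspondence $\tau \leftrightarrow \Phi_0(\tau)$ under which a path between $\tau_\phi$ and $\tau_\psi$ in $\mCol(\M(G),PG(t-1,2))$ exists if and only if one exists between $\phi$ and $\psi$ in $\kCol(G,K_{2^t})$. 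The only subtlety is that $\Phi_0$ produces Kempe colourings lying in the slice $\gCol(G,D_u(N);0)$, i.e.\ sending the chosen root $v_0$ to $0$, while an arbitrary Kempe instance need not have that form.

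Accordingly, given an instance $(G,\phi_0,\psi_0)$ of $\kRecol(K_{2^t})$, I would first fix an identification of the colour set with $\Z_2^t$ and (harmlessly, since every $\phi^{-1}(a,b)$-component lies inside one component of $G$, so the Kempe dynamics and the matroid $\M(G) = \bigoplus_i \M(G_i)$ both split over the components of $G$) reduce to the case that $G$ is connected with a root $v_0$. Translating all colours by the constant $\phi_0(v_0)$ is an automorphism of $\kCol(G,K_{2^t})$, so I may assume $\phi_0(v_0) = 0$; and if now $\psi_0(v_0) = c \neq 0$, I replace $\psi_0$ by $\psi_0 + c\bone{U}$, where $U$ is the component of $\psi_0^{-1}(0,c)$ containing $v_0$ -- a single Kempe step, so the result stays in the component of $\psi_0$ and now sends $v_0$ to $0$. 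Calling the resulting maps $\phi$ and $\psi$, the instance $(G,\phi,\psi)$ is equivalent to $(G,\phi_0,\psi_0)$ and both $\phi$ and $\psi$ lie in the slice.

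The reduction then outputs the graphic matroid $\M(G)$ together with the maps $\tau_\phi,\tau_\psi : \M(G) \to PG(t-1,2)$ given by $\tau_\phi(uv) = \phi(u)+\phi(v)$ and $\tau_\psi(uv) = \psi(u)+\psi(v)$. By the standard Tutte construction (Proposition 3.4 of \cite{DNR06}) these are matroid homomorphisms equal to $\Phi^{-1}(\phi)$ and $\Phi^{-1}(\psi)$, and because $\phi$ and $\psi$ sit in the slice we have $\Phi_0(\tau_\phi) = \phi$ and $\Phi_0(\tau_\psi) = \psi$. Hence \Cref{thm:MK} says $\tau_\phi$ and $\tau_\psi$ are joined by a path in $\mCol(\M(G),PG(t-1,2))$ if and only if $\phi$ and $\psi$ are joined by a path in $\kCol(G,K_{2^t})$, i.e.\ if and only if $(G,\phi_0,\psi_0)$ is a YES instance. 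The normalization, the matroid $\M(G)$, and the two maps are all computable in polynomial time, and every instance produced is graphic; together with $\PSPACE$-membership this gives the corollary. I expect the only real friction to be this bookkeeping -- putting the Kempe instance into slice form and splitting over the components of $G$; granting \Cref{thm:MK} and \Cref{kemperecol}, there is nothing deeper to do.
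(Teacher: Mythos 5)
Your proposal is correct and follows exactly the paper's route: reduce $\kRecol(K_{2^t})$ to $\mRecol(PG(t-1,2))$ on graphic instances via the Tutte connection, with \Cref{thm:MK} doing all the work and \Cref{kemperecol} supplying hardness of the source problem. The only difference is that you spell out the normalization putting the Kempe instance into the slice $\gCol(G,D_u(\N);v_0,0)$, a bookkeeping step the paper leaves implicit (and addresses only in the remark following \Cref{thm:MK} about automorphisms of $K_n$); this is a welcome extra detail, not a different argument.
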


 Now consider the following example. 
 
 \begin{example}\label{ex:matret}
  Recall from \Cref{ex:DGofClique} that the matroid clique $M(K_n)$ has representation as the matrix of columns of $\Z_2^{n-1}$ with weight $1$ or $2$. The projective geometry matroid $PG(t-1,2)$ has representation as the matrix of all non-zero columns of $\Z_2^t$. 
  
  Let $n = 2^t$ and let $B$ be the set of columns in $\Z_2^{2^t-1}$ of weight $1$. This set has cardinality $|B| = 2^t-1$ so there exists a bijection $\beta_0$ to the non-zero columns of $\Z_2^t$. As $B$ is a basis of $\Z_2^{2^t-1}$ this extends uniquely to a linear map $\beta: \Z_2^{2^t-1}$ to $\Z_2^t$. As this is linear, it restricts, on $M(K_{2^t})$ to a matroid homomorphism to $PG(t-1,2)$. 
  The inverse $c = \beta_0^{-1}$ of $\beta_0$ is also a matroid homomorphism of $PG(t-1,2)$ into $M(K_{2^t})$ as it is also linear and all elements of $B$ are points of $M(K_n)$. As $r \circ c$ is the identity, $r$ is a retraction of $M(K_{2^t})$ to $PG(t-1,2)$. 
 \end{example}

 This example shows that there is a retraction from $\M(K_{2^t})$ to $PG(t-1,2)$. So by \Cref{prop:MRetEquiv}, we get the following $n = 2^t$ cases of \Cref{FullMain} immediately from \Cref{cor:PGhard}.
 
  \begin{corollary}\label{HardK2t}
   The problem $\mRecol(\M(K_{2^t}))$ is $\PSPACE$-complete for $t \geq 2$. Indeed it is $\PSPACE$-complete for graphic instances. 
  \end{corollary}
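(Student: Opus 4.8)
The plan is to deduce \Cref{HardK2t} directly from the two ingredients already in hand: the $\PSPACE$-completeness of $\mRecol(\PG(t-1,2))$ from \Cref{cor:PGhard}, and the retraction $r\colon \M(K_{2^t}) \to \PG(t-1,2)$ constructed in \Cref{ex:matret}. First I would note that membership in $\PSPACE$ is immediate, since $\mRecol(\N)$ is in $\PSPACE$ for every binary matroid $\N$ (the colouring graph has size exponential in the instance, and connectivity can be checked nondeterministically in polynomial space, so by Savitch it is in $\PSPACE$). So the content is $\PSPACE$-hardness.

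For hardness, the key step is to invoke \Cref{prop:MRetEquiv}: since \Cref{ex:matret} exhibits $\PG(t-1,2)$ as a retract of $\M(K_{2^t})$, we get the polynomial reduction $\mRecol(\PG(t-1,2)) \leq_{\Poly} \mRecol(\M(K_{2^t}))$. Combined with \Cref{cor:PGhard}, which says $\mRecol(\PG(t-1,2))$ is $\PSPACE$-complete for $t \geq 3$, this yields that $\mRecol(\M(K_{2^t}))$ is $\PSPACE$-hard, hence $\PSPACE$-complete, for $t \geq 3$, i.e. for $n = 2^t \geq 8$. The remaining case is $t = 2$, $n = 4$: here $\PG(1,2) = \M(K_3)$, and \Cref{cor:PGhard} does not apply since it requires $t \geq 3$; but \Cref{ex:matret} still gives a retraction $\M(K_4) \to \M(K_3)$ (indeed \Cref{Dismantk4k3} already shows $\M(K_4)$ dismantles to $\M(K_3)$), so by \Cref{prop:MRetEquiv} it suffices to know that $\mRecol(\M(K_3))$ is $\PSPACE$-complete. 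That will need to be established separately — and this is the one genuine gap in a naive reading of the excerpt. Looking at the outline, the $n=3$ case is claimed to follow from \Cref{HardK2t} via the results of Section \ref{sect:Induced}, so there is a potential circularity; the clean way to proceed is to prove $\PSPACE$-hardness of $\mRecol(\M(K_4))$ first (from $t=2$ one still needs the $\kRecol(K_4)$ result, which is exactly the $n=4$ case of \Cref{kemperecol}, and \Cref{thm:MK}/\Cref{cor:PGhard} can be read with $t = 2$ provided $n = 2^t = 4 \geq 4$ — the hypothesis $t \geq 3$ in \Cref{thm:MK} is only used to have $n \geq 4$, so in fact $t = 2$ is already covered). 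So in fact \Cref{cor:PGhard} does hold for $t \geq 2$ once one checks that its proof (via \Cref{thm:MK} and \Cref{kemperecol}) goes through at $t=2$, and then the retraction argument finishes all $t \geq 2$ uniformly.

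Concretely, here is the writeup I would insert.

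\begin{proof}
 Membership in $\PSPACE$ holds for $\mRecol(\N)$ for every binary matroid $\N$, as the colouring graph $\mCol(\M,\N)$ has size exponential in $|E(\M)|$ and connectivity between two given vertices is decidable in nondeterministic, hence (by Savitch's theorem) deterministic, polynomial space.

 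For $\PSPACE$-hardness, fix $t \geq 2$, so $n = 2^t \geq 4$. The reduction in \Cref{thm:MK} and its corollary \Cref{cor:PGhard} only use $t \geq 3$ to guarantee $n = 2^t \geq 4$, which already holds at $t = 2$; together with the $n = 4$ case of \Cref{kemperecol} this shows $\mRecol(\PG(t-1,2))$ is $\PSPACE$-complete for all $t \geq 2$. By \Cref{ex:matret}, $\PG(t-1,2)$ is a retract of $\M(K_{2^t})$, so \Cref{prop:MRetEquiv} gives
 \[ \mRecol(\PG(t-1,2)) \leq_{\Poly} \mRecol(\M(K_{2^t})). \]
 Hence $\mRecol(\M(K_{2^t}))$ is $\PSPACE$-hard, and therefore $\PSPACE$-complete, for all $t \geq 2$.

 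For the final sentence, note that all of the matroids in the reduction chain — $\mRecol(\PG(t-1,2))$ via \Cref{thm:MK}, and the image under the retraction $r$ — are obtained from graphic instances: \Cref{cor:PGhard} already asserts $\PSPACE$-completeness for graphic instances, and a graphic instance $\M(G)$ remains a graphic instance under the reduction of \Cref{prop:MRetEquiv} since that reduction does not alter the instance matroid $\M$ at all, only the target. Thus $\mRecol(\M(K_{2^t}))$ is $\PSPACE$-complete for graphic instances.
\end{proof}

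The main obstacle, as noted, is the bookkeeping around the $t=2$ boundary: one must be careful that the chain $\kRecol(K_4) \to \mRecol(\PG(1,2)) = \mRecol(\M(K_3)) \to \mRecol(\M(K_4))$ is not circular with the Section \ref{sect:Induced} deductions of the $n=3$ case, which is why I would phrase \Cref{cor:PGhard} as already covering $t \geq 2$ (its proof does) rather than re-deriving the $n=4$ case from the $n=3$ case.
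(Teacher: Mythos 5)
Your proof is correct and follows exactly the paper's route: the retraction $\M(K_{2^t}) \to \PG(t-1,2)$ from \Cref{ex:matret} combined with \Cref{prop:MRetEquiv} and \Cref{cor:PGhard}, with the observation that the instance matroid is unchanged so graphic instances are preserved. You also correctly spotted and repaired the paper's off-by-one at the boundary — \Cref{thm:MK} and \Cref{cor:PGhard} are stated for $t \geq 3$ but their proofs only need $n = 2^t \geq 4$, i.e.\ $t \geq 2$, and your reading (that $\PG(1,2) = \M(K_3)$ is already covered, so no circularity with the Section~\ref{sect:Induced} deduction of the $n=3$ case) is the right one.
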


  This shows in particular that $\mRecol(\M(K_4))$ is $\PSPACE$-complete.  In \Cref{Dismantk4k3} we saw that $\M(K_4)$ dismantles to $\M(K_3)$ and so by \Cref{prop:dismant} we get the following $n = 3$ case of \Cref{FullMain}. 

  \begin{corollary}\label{HardK3}
     The problem $\mRecol(\M(K_{3}))$ is $\PSPACE$-complete for graphic instances.
  \end{corollary}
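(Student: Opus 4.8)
The plan is to combine the $\PSPACE$-completeness of $\mRecol(\M(K_4))$ on graphic instances, which is \Cref{HardK2t} with $t = 2$, with the fact that $\M(K_4)$ dismantles to $\M(K_3)$, which is \Cref{Dismantk4k3}. By \Cref{prop:dismant}, a dismantling retraction $r : \M(K_4) \to \M(K_3)$ yields a polynomial equivalence $\mRecol(\M(K_3)) \equiv_{\Poly} \mRecol(\M(K_4))$; in particular $\mRecol(\M(K_4)) \leq_{\Poly} \mRecol(\M(K_3))$. Since $\mRecol(\cdot)$ lies in $\PSPACE$ for every target matroid (as noted in the introduction), and since the graphic-instance restriction is a sub-problem of the full one, this already gives $\PSPACE$-completeness of $\mRecol(\M(K_3))$ once we check that the reduction is faithful to \emph{graphic} instances.

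First I would spell out the reduction hidden in the proof of \Cref{prop:dismant}: given an instance $(\M, \tau, \sigma)$ of $\mRecol(\M(K_4))$, send it to $(\M, r\circ\tau, r\circ\sigma)$, an instance of $\mRecol(\M(K_3))$. By \Cref{fact:dualhom} the induced map $r^*$ carries any path from $\tau$ to $\sigma$ in $\mCol(\M, \M(K_4))$ to a path from $r\circ\tau$ to $r\circ\sigma$ in $\mCol(\M,\M(K_3))$; conversely, part (2) of \Cref{compose} together with $\id \sim r$ in $\mCol(\M(K_4),\M(K_4))$ gives a path from $\tau$ to $r\circ\tau$ (and from $\sigma$ to $r\circ\sigma$) inside $\mCol(\M,\M(K_4))$, so a path between the $r$-images lifts to a path between the originals. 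Hence $(\M,\tau,\sigma)$ is a YES instance if and only if $(\M, r\circ\tau, r\circ\sigma)$ is, and the construction is plainly polynomial, so this is a valid polynomial reduction.

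The key observation — and essentially the only point that needs to be made for the `graphic instances' refinement — is that this reduction leaves the instance matroid $\M$ untouched: it merely post-composes the two colourings with the fixed map $r$. Therefore, if $\M = \M(G)$ is a graphic matroid, the image instance $(\M(G), r\circ\tau, r\circ\sigma)$ is again a graphic instance of $\mRecol(\M(K_3))$. Restricting the reduction to graphic instances thus shows that the graphic-instance version of $\mRecol(\M(K_4))$ polynomially reduces to the graphic-instance version of $\mRecol(\M(K_3))$, and \Cref{HardK2t} supplies the $\PSPACE$-hardness of the former.

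I do not expect a genuine obstacle here: the whole argument is bookkeeping on top of \Cref{prop:dismant} and \Cref{HardK2t}. The only subtleties worth watching are that \Cref{prop:dismant} is invoked in the direction that transports hardness, namely $\mRecol(\M(K_4)) \leq_{\Poly} \mRecol(\M(K_3))$ rather than its reverse, and that the reduction keeps the underlying instance matroid fixed, which is exactly what makes the restriction to graphic instances go through.
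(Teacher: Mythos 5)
Your proposal is correct and follows exactly the paper's route: it combines \Cref{HardK2t} (the $t=2$ case) with the dismantling retraction $\M(K_4)\to\M(K_3)$ from \Cref{Dismantk4k3} and the equivalence from \Cref{prop:dismant}. Your explicit check that the reduction fixes the instance matroid, so graphic instances map to graphic instances, is a useful detail the paper leaves implicit.
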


  We cannot imagine that the same result  would not hold for all cliques $K_n$ with $n \geq 4$, but we can only show it using a non-graphic reduction. We do this in the following section. But first, a question.

  We wonder for which matroids $N$ one can prove Theorem \ref{thm:MK}. 
  \begin{quest}
   For which $N$ it is true that there is a path between $\tau$ and $\tau'$ in $\mCol(M(G),N)$  if and only if there is a path between $\Phi_0(\tau)$ and $\Phi_0(\tau')$ in $\kCol(G,D_u(N))$?
  \end{quest}
   
  Necessarily by \Cref{easyMK}, for such $N$ we have that there is a path between $\tau$ and $\tau'$   in $\kCol(G,D_u(N);0)$ if and only if there is a path between them in $\kCol(G,D_u(N))$.  This is easy to show for $K_n$ for $n \geq 3$ using automorphisms of $K_n$, indeed an automorphism $\alpha$ of $D_u(N)$ switching only two vertices $0$ and $v$ can be used to show  that any $\phi \in \kCol(G,D_u(N);0)$ is adjacent in $\kCol(G,D_u(N))$
to $\alpha\circ \phi \in \kCol(G,D_u(N);v)$.

\section{Matroid $n$-recolouring is \rm{PSPACE}-complete}\label{sect:alln}

 In this section, we prove \Cref{thm:otherKn} which extends  \Cref{HardK2t} to $K_n$ for $n \geq 5$.  With \Cref{FullMainEasy}, this gives \Cref{FullMain}.

\newcommand{\V}[1]{V_{#1}}
\newcommand{\Vn}{V_n}
\newcommand{\VN}{V_N}
\newcommand{\U}[1]{U_{#1}}
\newcommand{\Un}{U_n}
\newcommand{\En}{E_n}
\newcommand{\E}[1]{E_{#1}}
\newcommand{\Nk}{\N_K}
\newcommand{\Nkf}{\N_{K_4}}
\newcommand{\NZ}{\N_Z}
\newcommand{\Mk}{\M_K}
\newcommand{\Mkf}{\M_{K_4}}
\newcommand{\MZ}{\M_Z}

 Recall that the set $\Vn = \{e_1, \dots, e_n \}$ of standard normal vectors in $\Z_2^n$ is a basis of $\Z_2^n$.  The {\em support} of a vector in $\Z_2^n$ is the set of elements in this basis used to represent it, its {\em weight} is the number of vectors in its support. Let $E_n$ be the set of weight two vectors of $\Z_2^n$.  

  The standard {\em graphic representation} of a graphic matriod $\M(G)$ is the representation by the matrix $A_G$ with columns  $E(\M(G)) = E_G = \{ v_i + v_j \mid v_i \sim_G v_j \}$ in $\Z_2^n$ where the vertex set  $V = V(G) = \{v_1, \dots, v_n\}$ is viewed as the set $\Vn$.  
  In the case of the matroid $M(K_n)$, $E_{K_n} = \{ v_i + v_j \mid v_i \neq v_j \in V \}$; we often write this as $2V$.

  We start with a simple and believable fact. 

    \begin{fact}\label{k5auto}
      For $n \geq 5$ and any loopless graph $G$, any homomorphism $\tau: M(K_n) \to M(G)$ maps $M(K_n)$ isomorphically to a copy of $M(K_n)$ in $M(G)$.  
    \end{fact}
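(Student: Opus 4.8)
The plan is to show that such a $\tau$ is injective and that its image is a $K_n$-subgeometry, i.e. a flat of $M(G)$ isomorphic to $M(K_n)$. The natural way to access $M(K_n)$ is through its \emph{circuits}: $M(K_n)$ is generated by triangles $\{v_i+v_j, v_j+v_k, v_i+v_k\}$, and the key structural feature I would exploit is that in $M(K_n)$, for $n\ge 4$, every point lies in many triangles and every two points lie in a common triangle, while the only repeated "relations" are the obvious ones. Concretely, I would first argue that $\tau$ is determined by a linear map on the point space $P(M(K_n))=\Z_2^n/\langle\mathbf 1\rangle$ (using the $\Z_2^n$ representation of $M(K_n)$ from the paragraph preceding the fact), so $\tau$ is the restriction of a linear map $\Z_2^n\to P(M(G))$; injectivity on points then reduces to showing this linear map has no nontrivial kernel meeting the set $E_n$ of weight-two vectors, or collapsing two such vectors together.

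The core step is ruling out collapses. Suppose $\tau(v_i+v_j)=\tau(v_k+v_\ell)$ for two distinct edges of $K_n$. Since $n\ge 5$, I can pick a fifth vertex $v_m$ outside $\{i,j,k,\ell\}$ (and, in the case the two edges share a vertex, I have even more room). Then I would look at the triangles on $\{v_i,v_j,v_m\}$, etc., and use that $\tau$ sends circuits to disjoint unions of circuits in the \emph{loopless} matroid $M(G)$: a triangle must go to a single triangle (it cannot go to $\emptyset$, since $\tau$ of a single point is a point, never a loop — $M(G)$ is loopless — and a $3$-element dependent image set that is a disjoint union of circuits in a simple binary matroid must be a single $3$-circuit or contain a loop). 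Pushing the assumed identification $\tau(v_i+v_j)=\tau(v_k+v_\ell)$ through two or three well-chosen triangles should force some $\tau(v_a+v_b)$ to be a loop or to coincide with another image point in a way that makes a triangle image fail to be a disjoint union of circuits — a contradiction. This gives injectivity. For the same reason, every triangle of $M(K_n)$ maps to a genuine triangle of $M(G)$, so $\tau$ preserves all circuits of $M(K_n)$ bijectively onto circuits of the image; since $M(K_n)$ is exactly the binary matroid generated by its triangles, the image is a restriction of $M(G)$ isomorphic to $M(K_n)$, and $\tau$ is an isomorphism onto it.

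The main obstacle I anticipate is the case analysis in the collapse argument: the two edges $v_iv_j$ and $v_kv_\ell$ may share a vertex or not, and $M(G)$ could a priori be large and wild, so I need the triangle-tracking argument to be uniform. The cleanest route is probably to work entirely inside the span of the (at most $5$) vertices involved: restricting $\tau$ to the submatroid $M(K_5)$ on $\{v_i,v_j,v_k,v_\ell,v_m\}$, show that the induced map $M(K_5)\to M(G)$ already cannot identify two of these edges or kill one, and then note $n\ge 5$ suffices to always find such a $K_5$. Here one uses that $M(K_5)$ has rank $4$, $10$ points, and a rich triangle structure, together with the loopless hypothesis on $M(G)$; this makes the contradiction a finite, checkable computation rather than an argument about arbitrary $G$. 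I would expect this $K_5$-local reduction to be the technical heart, and everything else (linearity, passing from "preserves all triangles injectively" to "is an isomorphism onto a restriction") to be routine given the tools in Sections~\ref{sect:tech} and~\ref{sect:Induced}.
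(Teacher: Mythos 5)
Your overall strategy (rule out collapses, note that each triangle of $M(K_n)$ must map to a genuine triangle because $M(G)$ is simple and loopless, conclude the image is a copy of $M(K_n)$) is reasonable, and the triangle observation is correct. But the technical heart --- ruling out $\tau(v_i+v_j)=\tau(v_k+v_\ell)$ --- is only asserted (``should force \dots a contradiction'', ``a finite, checkable computation''), and the ingredients you propose to use for it do not suffice. Restricting to a $K_5$ in the domain and using only ``circuits map to disjoint unions of circuits'' together with looplessness and simplicity of the target cannot work: quotienting the rank-$4$ representation of $M(K_5)$ by the all-ones vector gives a matroid homomorphism $M(K_5)\to PG(2,2)$ onto the simple, loopless Fano plane that identifies the disjoint edges $v_1+v_2$ and $v_3+v_4$. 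Nothing in your sketch distinguishes $M(G)$ from $PG(2,2)$. Relatedly, identifying two disjoint edges propagates \emph{consistently} through every triangle and $4$-circuit of the $K_4$ they span (this is exactly how $M(K_4)\to M(K_3)$ arises), so tracking two or three triangles never produces a loop or a failed circuit image by itself.

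The missing idea is to use that the points of $M(G)$ are edges of a graph. Set $a_p=\tau(v_p+v_m)$ for $p\in\{i,j,k,\ell\}$ and a fifth vertex $v_m$; then each $a_p$ and each $a_p+a_q$ must be a weight-two vector over $V(G)$, so the four distinct edges $a_i,a_j,a_k,a_\ell$ pairwise share an endpoint and, being four in number, must form a star $\{v_0+w_1,\dots,v_0+w_4\}$ rather than a triangle. Hence $a_i+a_j+a_k+a_\ell=w_1+w_2+w_3+w_4\neq 0$, contradicting the assumed collapse, which says exactly that this sum vanishes. This ``at least four pairwise-intersecting edges form a star'' step is where $n\geq 5$ and graphicness enter, and it is precisely what the paper's proof does in different clothing: it writes $\tau=\Phi^{-1}(\phi)$ for a graph homomorphism $\phi\colon K_n\to D(M(G),A_G)$ into the decision graph, observes that a clique of size $n\geq 5$ through $0$ there consists of $0$ together with $n-1\geq 4$ pairwise-intersecting edges of $G$, hence a star $\{0,v_0+v_1,\dots,v_0+v_{n-1}\}$, and reads off $\tau(u_i+u_j)=v_i+v_j$ directly. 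If you supply the star argument, your route closes; without it, the proof has a genuine gap.
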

    \begin{proof}
     Let $\M(K_n)$ have a graphic representation with $V(K_n) = \{u_0, \dots, u_{n-1}\}$, and let $\M(G)$ have a graphic representation by $A_G$     with $V(G) = \{v_0, \dots, v_{m-1} \}$.  

    Any homomorphism $\tau: \M(K_n) \to M(G)$ is $\Phi^{-1}(\phi)$ for some homomorphism $\phi: K_n \to D$ of $K_n$ to the decision graph $D = D(\M(G),A_G)$ for which $\phi(u_0) = 0$. As adjacent vertices in $D$ differ by columns in $A_G$, which have weight $2$ over $V(G)$, 
   a clique of size $n \geq 5$ in $D$ containing the vertex $0$ can be assumed to be
  $\{0, v_0 + v_1, v_0 + v_2, \dots, v_0 + v_{n-1}\}$ by permuting the labels in $V(G)$. So we may assume that $\phi(u_i) = v_0 + v_i$ for all $i$. It follows that  $\tau(u_i + u_j) = v_i + v_j$ for all $i,j \in \{0, \dots, n-1\}$ and so  $\tau$ maps $M(K_n)$ isomorphically to a copy of $M(K_n)$ in $M(G)$, as needed. Indeed $\tau(u_0 + u_j) = 0 + u_0 + u_j = u_0 + u_j$, and when $i,j \neq 0$ we have 
\[ \tau(u_0 + u_j) = \phi(u_i) + \phi(u_j) = v_0 + v_i + v_0 + v_j = v_i + v_j. \]  
    \end{proof}

 \begin{theorem}\label{thm:otherKn}
   Let $\N$ be a loopless graphic matroid containing a copy of $M(K_n)$ for $n \geq 5$. The problem $\mRecol(\N)$ is $\PSPACE$-complete. 
 \end{theorem}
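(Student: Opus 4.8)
The plan is to reduce $\mRecol(\M(K_4))$, which is $\PSPACE$-complete by \Cref{HardK2t}, to $\mRecol(\N)$. Writing $\N=\M(H)$, note that since $\N$ contains a copy of $\M(K_n)$ with $n\ge5$ it contains a copy $K$ of $\M(K_5)$, on five of those $n$ vertices; this copy of $\M(K_5)$ is the only feature of $\N$ the reduction will use. Given an instance $(\M,\tau,\sigma)$ of $\mRecol(\M(K_4))$ I would construct, in time polynomial in the size of $\M$, a binary matroid $\M'$ and homomorphisms $\tau',\sigma'\colon\M'\to\N$ such that there is a path from $\tau'$ to $\sigma'$ in $\mCol(\M',\N)$ if and only if there is one from $\tau$ to $\sigma$ in $\mCol(\M,\M(K_4))$.

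The device is a rigid frame. Keep the copy $K\cong\M(K_5)$ and let $L$ be its restriction to the six points that are the edges of a fixed $K_4\subseteq K_5$, so $L\cong\M(K_4)$. The point of using $K$ rather than $L$ itself is \Cref{k5auto}: every homomorphism $K\to\N$ is an isomorphism onto the graphic matroid of a $K_5$ on five vertices of $H$, so under \emph{any} homomorphism of $\M'$ the image of $L$ is the copy of $\M(K_4)$ carried by the edges of a $K_4$ on four vertices $w_1,\dots,w_4$ of $H$. The point of routing through $L$ is that the span in $P(\N)$ of those six edges meets $E(\N)$ in exactly those six edges: the only other non-zero vector in the span is $w_1+w_2+w_3+w_4$, which has weight $4$ in the vertex basis and is therefore not an edge of $H$ (equivalently, $\N$, being graphic, has no Fano restriction, so this rank-$3$ flat has exactly six points). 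Hence any point of $\M'$ whose image is forced by a circuit of $\M'$ into the span of the image of $L$ must receive a colour from this copy of $\M(K_4)$.

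I would then build $\M'$ from $\M$ and $K$ by linking them with an auxiliary gadget of further points and circuits, designed so that: (i) every homomorphism $\M'\to\N$ sends the points of $\M$ into the span of the image of $L$ — hence, by the paragraph above, into a copy of $\M(K_4)$ — and restricts on $E(K)$ to a homomorphism of $K$; and (ii) the circuits of $\M'$ among the points of $\M$, and hence which subsets of them are cocircuits of $\M'$ meeting only those points, are exactly those of $\M$, so that recolouring moves of $\mCol(\M',\N)$ that affect the $\M$-part while leaving the image of $L$ fixed correspond precisely to moves of $\mCol(\M,\M(K_4))$. Building such a gadget of polynomial size, while keeping $\M'$ simple and the point space of $\M$ embedded, is one technical part of the proof. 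Finally set $\tau'$ and $\sigma'$ to agree with $\tau$ and $\sigma$ on the points of $\M$ and with one common homomorphism on $E(K)$.

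Correctness in the forward direction is routine: a path from $\tau$ to $\sigma$ in $\mCol(\M,\M(K_4))$ is replayed on the $\M$-part of $\M'$ using the cocircuits of $\M'$ restricting to the corresponding cocircuits of $\M$, with the frame held fixed. The converse is the crux, and splits into two steps. First, a path from $\tau'$ to $\sigma'$ in $\mCol(\M',\N)$ must be shown to be replaceable by one that holds the image of $L$ fixed throughout: \Cref{k5auto} already forces each individual homomorphism of the frame $K$ to be rigid, with image a copy of $\M(K_5)$ in $H$, but it does not by itself exclude recolouring steps that slide that copy onto a different $K_5$ of $\N$, so the gadget must either rule such steps out or one must show (in the spirit of the automorphism argument closing \Cref{sect:decisiongraphs}) that a path using them can be rerouted to avoid them. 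Second, once the image of $L$ is fixed, property (i) makes the restriction of every colouring on the path to the points of $\M$ a homomorphism into that one copy of $\M(K_4)$, property (ii) makes each step either fix the $\M$-part or change it on a cocircuit of $\M$ by a constant, and the projection to the $\M$-part is then a legal path from $\tau$ to $\sigma$ in $\mCol(\M,\M(K_4))$. Since $\M'$ is polynomial in the size of $\M$ and $\mRecol(\N)$ is clearly in $\PSPACE$, this yields $\PSPACE$-completeness. I expect the first step of the converse — the control of recolouring paths that disturb the frame — to be the main obstacle, with the simultaneous achievement of (i) and (ii), without the linking circuits perturbing the matroid structure of $\M$, a secondary technical point.
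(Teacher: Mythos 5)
Your architecture is exactly that of the paper's proof---reduce from $\mRecol(\M(K_4))$ (hard by \Cref{HardK2t}), attach a rigid clique frame whose rigidity comes from \Cref{k5auto}, and force the points of $\M$ into the copy of $\M(K_4)$ spanned by four frame vertices---but the two steps you yourself flag as ``the technical part'' and ``the main obstacle'' are precisely where the proof lives, and you leave both unexecuted. For the gadget, the paper's construction is short: take the frame to be a copy $\M_K$ of the largest $M(K_n)$ in $\N$, and for each point $e$ of $\M$ add one new point $e'$ together with the circuit $\{e,e'\}\cup Z$, where $Z$ is the $4$-star $\{u_1+u_n,u_2+u_n,u_3+u_n,u_4+u_n\}$ inside the frame. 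Rigidity of the frame forces $\sigma(e)+\sigma(e')=v_1+v_2+v_3+v_4$ for any homomorphism $\sigma$, and two weight-$2$ vectors summing to a weight-$4$ vector must both be supported on $\{v_1,\dots,v_4\}$; this is your property (i), and your span-of-the-six-edges observation is the same computation. Your property (ii) holds because the circuits of $\M$, of the frame, and the new circuits $\{e,e'\}\cup Z$ generate the cycle space of the constructed matroid, so a cocircuit $C$ of $\M$ lifts to the cocycle $C\cup C'$ and the forward simulation goes through.

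The more serious gap is the converse. You propose either to design the gadget so that frame-sliding steps cannot occur, or to reroute paths around them; the paper does neither---such steps \emph{do} occur and are instead classified and absorbed. For a single step $\sigma'=\sigma+c\chi_C$, the restriction of $C$ to the frame is a cocircuit $U\times\bar{U}$ of $M(K_n)$, and the requirement that adding $c$ to each of its colours again yield edges of $\N$ forces, up to relabelling, either $|U|=2$ with $c=v_1+v_2$ (the frame's image is unchanged and the step projects to a legal step of $\mCol(\M,\M(K_4))$ on the restricted cocircuit), or $|U|=1$ with $c=v_1+v_1'$, in which case the frame slides onto a clique with $v_1$ replaced by $v_1'$. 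In the sliding case one checks that the restriction to $\M$ changes exactly by post-composition with the transposition isomorphism $\alpha$ exchanging $v_1$ and $v_1'$, and since $\id\sim\alpha$ in $\mCol(\M(K_4),\M(K_4))$, part (2) of \Cref{compose} turns this into a walk in $\mCol(\M,\M(K_4))$. Without this case analysis---or some substitute for it---the backward direction of your reduction is unproven, so as it stands the proposal is an accurate outline of the right strategy rather than a proof.
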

\begin{proof}

    Let $\N$ be a graphic matroid containing a copy of $M(K_5)$. Let $n \geq 5$ be the largest $n$ such that 
   $\N$ contains a copy of $M(K_n)$. We use the graphic representation of $\N$ over $V_\N = \{v_1, \dots v_N\}$.
   We assume that the span of $V_n = \{v_1, \dots, v_n\}$ induces a copy $\Nk$ of $\M(K_n)$, and let $\Nkf$ 
   be the copy of $\M(K_4)$ induced by the span of $V_4 = \{v_1, \dots, v_4 \}$.   Let $\NZ$ be the $4$-star 
   $ \{v_1 + v_n, v_2 + v_n, v_3 + v_n, v_4 + v_n\}$ in $\Nk$.
   
   We use a gadget construction to  reduce $\mRecol(M(K_4))$ to $\mRecol(\N)$, showing that the latter is hard.

 \begin{construction}
  For a matroid $\M$, construct the matroid $\M^*$ as follows. 
  Let $\Mk$, graphically represented over $U_n = \{u_1, \dots, u_n\}$, be a copy of $\Nk$. 
 Let $\Mkf$ be the copy of $\Nkf$ on $U_4 = \{u_1, \dots, u_n\}$ and $\MZ =  \{u_1 + u_n, u_2 + u_n, u_3 + u_n, u_4 + u_n\}$ be the copy of $\NZ$.   
Construct $M^*$ from $M$ and $\Mk$ by adding, for each $e \in E(\M)$, the new point $e'$ and the circuit 
 $\{e, e'\} \cup \MZ$.   Where $E$ is the  point set of $\M$ let $E' = \{e' \mid e \in E\}$;  so $\M^*$ has point set $E \cup E' \cup 2U_n$.  
 \end{construction}

 Observe that there is a basis of $\M^*$ consisting of a basis of $M$ and the star
 $\{u_i+u_n \mid u_i \in \Un \setminus \{u_n\}\}$ which contains  $\MZ$. With respect to this basis, the fundamental cycle containing a point $e' \in E'$ is 
 $C_{e'} = \{e, e',\} \cup \MZ$.
   Every homomorphism $\tau \in \mCol(\M, M(K_4))$ can be viewed as a homomorphism  $\tau: \M \to \N_{K_4} \subset \N$.  We extend such a homomorphism $\tau$ to a map $s(\tau)$ of $M^*$ to $\N$ by letting it take $\Mk$ identically to $\Nk$; that is, by setting $s(\tau)(u_i + u_j) = v_i + v_j$.
   As this  defines $s(\tau)$ on our basis of $M^*$, and so it extends linearly to a map $s(\tau)$ from $E(\M^*)$ to $\Nk \subset \N$.  

 \begin{claim}
 For a homomorphism $\tau: \M \to \M(K_4)$, the map $s(\tau): \M^* \to \N$ is a homomorphism. 
 \end{claim}
 \begin{proof}\claimproof
 The map $s(\tau)$ is homomorphism on the submatroids $\M$ and $\Mk$ of $\M^*$, which contain a basis of $\M^*$, so extending linearly, this is a homomorphism   as long as for any $e' \in E'$ the linear extension  $s(\tau)(e') = s(\tau)(e) + s(\tau)(\NZ) = s(\tau) + v_1 + v_2 + v_3 + v_4$ is indeed an edge of $\N$. 
  But $s(\tau)(e)$ is in $2\V4$ and so this is also in $2\V4$,  and so is an edge of $\Nkf \subset \N$.
  \end{proof}

  On the other hand, we have the following. 

 \begin{claim}
   For a homomorphism $\sigma: M^* \to \N$, the restriction $\sigma_M$ of $\sigma$ to $M$ is a homomorphsim of $M$ to some copy of $\M(K_4)$ in $\N$  
  \end{claim}   
  \begin{proof}\claimproof
      By definition $\M$ contains a copy $\Mk$ of $\M(K_n)$ which by \Cref{k5auto} maps isomorphically to a copy of $\M(K_n)$ in $\N$,
      and so we may assume that $\sigma(u_i + u_j) = v_i + v_j$ for all $u_i + u_j \in 2U_n = E(\Mk)$. Thus $\sigma(\MZ) = v_1 + v_2 + v_3 + v_4$.   As $0 = \sigma(C_{e'}) = \sigma(e) + \sigma(e') + \sigma(\MZ)$ we get that $\sigma(e) + \sigma(e') = v_1 + v_2 + v_3 + v_4$, and so $\sigma(e)$ and $\sigma(e')$, being in $2\VN$,  are both in $2\V4 \subset E(\N)$, as needed.

 \end{proof}

To finish the proof we show that  there is a path from $\tau$ to $\tau'$ in $\mCol(\M, \M(K_4))$ if and only if there is a path from $s(\tau)$ to $s(\tau')$ in  $\mCol(\M^*, \N)$.   As $\mRecol(\M(K_4))$ is $\PSPACE$-complete, this is enough to show that $\mRecol(\N)$ is too.


We use the graphic representation of  $\M(K_4)$ over $E_4 = \{e_1, e_2, e_3, e_4\}$. The first direction is quite easy.

\begin{lemma}
  If $\tau \sim \tau'$ in $\mCol(\M, \M(K_4))$ then there is a path from 
  $s(\tau)$ to $s(\tau')$ in  $\mCol(\M^*, \N)$.  
\end{lemma}  
\begin{proof} 
 Assume that $\tau \sim \tau$ in $\mCol(\M,\M(K_4))$.  So $\tau' = \tau + (e_i + e_j)\chi_C$ for some $i,j \in [4]$ and some cocircuit $C$ of 
  $\M$.  We claim that $C \cup C'$ where $C' = \{e' \mid e \in C\}$ is a cocycle of $\M^*$. Certainly it has even intersection with every cycle of $M$, and empty, so even, intersection with every cycle of $\Mk$.  Its intersection with the fundamental cycles $C_{e'}$ are also even, and so as we have verified it on a basis of the cycle space, its intersection with every cycle is even.  Thus $C \cup C'$ is indeed a cocycle of $\M^*$. It is then the disjoint union of cocircuits $C_1, \dots, C_r$ and so by \Cref{fact:buildhom} we get from $s(\tau)$ to $s(\tau')$ by adding $e_i + e_j$ to these cocircuits, one at a time.    
  \end{proof}

 The other direction is a bit more work.  

\begin{lemma}
   If there is a path from $s(\tau)$ and $s(\tau')$ in $\mCol(\M^*,\N)$, then there is a path from $\tau$ to $\tau'$ in $\mCol(\M, \M(K_4))$.   
\end{lemma}
\begin{proof}
  Let $s(\tau) = \sigma_0 \sim \sigma_1 \dots \sim \sigma_n = s(\tau')$ be a path
  from $s(\tau)$ to $s(\tau')$ in $\mCol(\M^*, \N)$.  We show, for an edge $\sigma \sim \sigma'$ in $\mCol(\M^*, \N)$, that there is a path from $\sigma_M$ to $\sigma'_M$ in $\mCol(\M, \M(K_4))$.  As clearly $s(\tau)_M = \tau$ for any $\tau$, we have that 
  $(\sigma_0)_M =  \tau$ and $(\sigma_n)_M = \tau'$, so this is enough.

    Let  $\sigma \sim \sigma'$ in $\mCol(\M^*, \N)$. We find a path from $\sigma_M$ to $\sigma'_M$. 
    As by \Cref{k5auto} $\sigma_M$ restricted to $\Mk$ maps isomorphically to some copy of $M(K_n)$ in $\N$, we can 
    again assume that  $\sigma_M(u_i + u_j) = v_i + v_j$ for $u_i + u_j \in 2\Un$.   
    As $\sigma \sim \sigma'$ we can write $\sigma' = \sigma + c \chi_C$ for some cocircuit $C$ of $\M^*$, which restricts on $\Mk$ to a cocircuit $C_K$.

    \begin{claim*}
      We may assume one of the following.
      \begin{enumerate}
       \item  $C_K$ is  $\{u_1\} \times \{u_2, \dots, u_n\}$ and so $c = v_1 + v'_1$ where the span of $\{v'_1, v_2, \dots, v_n\}$ induces
              another copy of $M(K_n)$ in $\N$, or
       \item $C_K$ is  $\{u_1, u_2\} \times \{u_3, u_4, \dots, u_n\}$, and so $c = v_1 + v_2$.
       \end{enumerate}
     \end{claim*}
    \begin{proof}\claimproof
        As $\Mk$ is graphic, the cocircuit $C_K$ is $U + \bar{U} = \{ u_i + u_j \mid u_i \in U, u_j \in \bar{U} \}$ for 
        some subset $U$ of $\Un$, and so $\sigma_M$ maps its points to colours $\{ v_i + v_j \mid    u_i \in U, u_j \in \bar{U} \}$.
        Adding $c \in 2\VN$ to each of these colours, we must get colours in $2\VN$, but this is only possible, upto permuting $[n]$,
       in the two listed cases. 

    \end{proof}

    In case (2), $\sigma'_\M = \sigma_\M + (v_1 + v_2)\chi_{C_K}$, and so $\sigma_\M \sim \sigma'_\M$ in $\mCol(\M,\M(K_4))$.  
    In case (1), we claim that a point $e$ of $\M$ is in $C_K$  if and only if 
    $v_1$ is in the support of $\sigma_M(e)$.  Indeed as $\sigma'_M$ maps $\Mk$ to a clique on  $\{v'_1,v_2, \dots, v_n\}$, $v_1$ is not in the support of any $\sigma_M'(e)$, which implies this.  Thus $\sigma'_\M$ is just $\alpha \circ \sigma_\M$ for the isomorphism $\alpha$ between the two copies of $M(K_n)$ that is induced by switching $v_1$ and $v_1'$.  It is a simple thing to check that there is an edge from the identity map $\id$ to the transposition $\alpha$ in $\mCol(\M(K_4),\M(K_4)$, and so by part (2) of \Cref{compose} there is a walk in $\mCol(\M, \M(K_4))$ from $\id \circ \sigma_M = \sigma_M$ to $a \circ \sigma_M = \sigma'_M$, as needed.

   \end{proof}
   \end{proof}

  \section{Conclusion}

  In proving \Cref{FullMain} we showed that $\mRecol(\M(G))$ is $\PSPACE$-complete for $G$ being $K_3$ or $K_4$ or any graph that contains a $K_5$. We expect the answer to one of the following questions to be true.

 \begin{quest} 
   Is $\mRecol(\M(G))$ $\PSPACE$-complete for any graph $G$ containing a $K_3$?  Or failing this, for any graph containing a $K_4$.\end{quest}    

 We have that this is true in the case that $G$ dismantles to $K_3$ or $K_4$, but this is a much stronger condition than containing $K_3$ or $K_4$. 

 We have not done any thing for binary matroids $\N$ that are not graphic, except that some of these may dismantle to a loop, and so have tractable recolouring problems.  These matroids do have recolouring graphs though, and we expect that the following might be true. 

 \begin{quest}
  Is $\mRecol(\N)$ $\PSPACE$-complete for any matroid $\N$ for which $D_u(\N)$ is loopless and contains a $K_4$?
 \end{quest}

  The only tractabililty results we have are when the problem is trivial. An example of a problem that is tractable but non-trivial would be interesting. In the graph case, it is shown in \cite{Wroch15} that $\gRecol(G)$ is tractable if $G$ has girth at least $5$; the problem is clearly non-trivial. What about for matroids?

\begin{quest}
  Is $\mRecol(\M(C_5))$ tractable? Is $\mRecol(\M(G))$ tractable   for all $G$ of girth at least $5$?  
\end{quest}

\newcommand{\doi}[1]{\href{http://dx.doi.org/#1}{\small\nolinkurl{DOI: #1}}}
\renewcommand{\url}[1]{\href{https://arxiv.org/abs/#1}{\small\nolinkurl{arXiv: #1}}}


\end{document}